\documentclass[a4paper,12pt]{amsart}

\usepackage{amsfonts}
\usepackage{amsmath}
\usepackage{amssymb}
\usepackage{graphicx}

\usepackage[usenames]{color}
\usepackage[colorlinks]{hyperref}

\newtheorem{thm}{Theorem}[section]
\newtheorem{lem}[thm]{Lemma}
\newtheorem{prop}[thm]{Proposition}

\theoremstyle{definition}

\theoremstyle{remark}
\newtheorem{rem}{Remark}[section]
\newtheorem{defn}{Definition}

\numberwithin{equation}{section}

\def\d{\mathrm d}

\let\Re\relax
\DeclareMathOperator{\Re}{Re}

\begin{document}

\title[On a CNLSE with irregular coefficients]{On a fractional nonlinear Schr\"odinger equation with irregular coefficients. case: $d<2s$}

\author[A. Altybay]{Arshyn Altybay}
\address{
  Arshyn Altybay:
    \endgraf
  Department of Mathematics: Analysis, Logic and Discrete Mathematics
  \endgraf
  Ghent University, Krijgslaan 281, Building S8, B 9000 Ghent
  \endgraf
  Belgium, 
  \endgraf
   Institute of Mathematics and Mathematical Modeling
  \endgraf
  125 Pushkin str., 050010 Almaty, Kazakhstan
  \endgraf  
    and
  \endgraf 
    al--Farabi Kazakh National University
  \endgraf
  71 al--Farabi ave., 050040 Almaty, Kazakhstan
  \endgraf
    {\it E-mail address} {\rm arshyn.altybay@gmail.com, arshyn.altybay@ugent.be}
 }

\author[M. Ruzhansky]{Michael Ruzhansky}
\address{
  Michael Ruzhansky:
  \endgraf
  Department of Mathematics: Analysis, Logic and Discrete Mathematics
  \endgraf
  Ghent University, Krijgslaan 281, Building S8, B 9000 Ghent
  \endgraf
  Belgium
  \endgraf
  and
  \endgraf
  School of Mathematical Sciences
  \endgraf
  Queen Mary University of London
  \endgraf
  United Kingdom
  \endgraf
  {\it E-mail address} {\rm michael.ruzhansky@ugent.be}
}

\author[M. Sebih]{Mohammed Elamine Sebih}
\address{
  Mohammed Elamine Sebih:
  \endgraf
  Laboratory of Geomatics, Ecology and Environment (LGEO2E)
  \endgraf
  Mustapha Stambouli University of Mascara, 29000 Mascara
  \endgraf
  Algeria
  \endgraf
  {\it E-mail address} {\rm sebihmed@gmail.com, ma.sebih@univ-mascara.dz}
}

\author[N. Tokmagambetov ]{Niyaz Tokmagambetov}
\address{
  Niyaz Tokmagambetov:
  \endgraf 
    al--Farabi Kazakh National University
  \endgraf
  71 al--Farabi ave., 050040 Almaty, Kazakhstan
  \endgraf
  and
  \endgraf   
  Institute of Mathematics and Mathematical Modeling
  \endgraf
  125 Pushkin str., 050010 Almaty, Kazakhstan
  \endgraf  
  {\it E-mail address:} {\rm tokmagambetov@math.kz}
  }

\thanks{This research was funded by the Science Committee of the Ministry of Education and Science of the Republic of Kazakhstan (Grant No. AP27508473), by the FWO Odysseus 1 grant G.0H94.18N: Analysis and Partial Differential Equations, and by the Methusalem programme of the Ghent University Special Research Fund (BOF) (Grant number 01M01021). MR is also supported by EPSRC grant UKRI3645 and by FWO grant G083525N.}

\keywords{Nonlinear Schr\"odinger equation, Cauchy problem, weak solution, singular coefficients, regularisation, very weak solution.}
\subjclass[2020]{35Q55, 35D30, 35K67, 35D99.}

\begin{abstract}

In the case when $d<2s$, where $d$ is the space dimension and $s$ is the fractional power of the Laplacian, we study the well-posedness for a cubic nonlinear Schrödinger equation (CNLSE) generated by the fractional Laplacian and involving distributional, or less regular, coefficients. We formulate our problem in the setting of the concept of so-called very weak solutions and prove that it has a very weak solution. Moreover, we prove the uniqueness in some adequate sense as well as the compatibility of the very weak solution with the classical one when the latter exists. Our results cover the classical case when: $d=1, s=1$. A second task in this paper is to conduct some numerical experiments where interesting behaviours of the very weak solution are observed. The obtained result is the first example of the very weak well-posedness in the setting of nonlinear partial differential equations.
\end{abstract}

\maketitle


\section{Introduction}
This article investigates the Cauchy problem for the cubic nonlinear Schr\"odinger equation (CNLSE) generated by the fractional Laplacian. More precisely, for fixed $T>0$ and $s> 0$, we consider:
\begin{equation}\label{Equation in introduction}
    \left\lbrace
    \begin{array}{l}
    iu_{t}(t,x) = (-\Delta)^{s} u(t,x) + V(x)u(t,x) + g(x)\vert u(t,x)\vert^2 u(t,x),\\
    u(0,x)=u_{0}(x),
    \end{array}
    \right.
\end{equation}
where $(t,x)\in\left[0,T\right]\times \mathbb{R}^{d}$, and $V$ and $g$ are assumed to be real-valued and non-negative. We allow the equation coefficients $V$ and $g$, and the Cauchy data $u_0$ to be distributions or more singular, and we prove that the problem is very weakly well-posed in the case when $d<2s$.

In the particular case when $s=1$, the equation in \eqref{Equation in introduction} is known in the literature as the Gross-Pitaevskii equation. It emerges when studying the behaviour of Bose-Einstein condensates up to a first-order approximation, and $\vert u\vert^2$ is the atomic density, $V$ represents an external potential, and $g$ measures the atomic interactions. We refer the reader to \cite{ESY10} and \cite{Rog13} and the references therein, for the derivation of the Gross-Pitaevskii equation for the dynamics of a Bose-Einstein condensate.

While a classical formulation of the problem often assumes regular enough potential $V$, atomic interaction coefficient $g$ and initial data $u_0$ (see, e.g. \cite{FYZ18, KOPV12}), which simplifies the theoretical analysis of the problem, real world phenomena frequently present more complex situations where the need to incorporate irregular coefficients and data is crucial, which introduces a layer of complexity to the problem that challenges the usual classical frameworks. From a physical point of view, incorporating distributional coefficients and data in physical models is a natural setting. For instance, a delta-like potential $V$ in the Gross-Pitaevskii equation can model a localised impurity in the condensate, while the distributional coefficient $g$ represents situations where the interaction strength is concentrated at a single point. Moreover, considering delta-like functions as the initial function $u_0$ can be interpreted as an initial localised excitation.

Linear and nonlinear Schr\"odinger type equations with distributional coefficients are rarely investigated in the literature. Up to our knowledge, few works consider highly singular coefficients, we cite, for instance, \cite{ARST21b, RST23, CRT22b} and more recently \cite{AACG24}. The rarity of works on this type of equation is related to the impossibility result of Schwartz \cite{Sch54}, where it was shown that distributions cannot be multiplied. This makes problems involving interactions between distributional coefficients and data impossible to be posed in any of the classical settings (strong, weak or distributional). In order to give sense to the multiplication of distributions and to provide a suitable framework where partial differential equations involving distributional data can be dealt with in a rigorous way, a new concept of solutions based on regularisation techniques was introduced in \cite{GR15} by Garetto and the second author. It was later applied to different situations. We cite for instance \cite{CRT21,CRT22a,CRT22b}, where the authors considered abstract mathematical problems, and \cite{RT17a,RT17b,Gar21,MRT19,ART19,ARST21a,ARST21b,ARST21c,SW22,ARST25} where physical models where investigated. More recently, we cite \cite{GLO21,BLO22,RSY22,RY22,CDRT23}. All these works dealt with linear equations. Our aim in this paper is to show the wide applicability of the concept of very weak solutions by considering a nonlinear equation.

\section{Preliminaries}
To start with, let us introduce some notations and notions that we will use in this paper.

\begin{itemize}
    \item By the notation $f\lesssim g$, we mean that there exists a positive constant $C$, such that $f \leq Cg$ independently on $f$ and $g$.
    \item We will be using the following version of H\"older's inequality. Let $p,q,r\in [1,\infty)$ be such that $\frac{1}{r}=\frac{1}{p} + \frac{1}{q}$. Assume that $f\in L^{p}(\mathbb{R}^d)$ and $g\in L^{q}(\mathbb{R}^d)$, then, $fg\in L^{r}(\mathbb{R}^d)$ and we have
    \begin{equation*}
        \Vert fg\Vert_{L^r}\leq \Vert f\Vert_{L^p}\Vert g\Vert_{L^q}.
    \end{equation*}
    \item Given $s>0$, the fractional Sobolev space is defined by
    \begin{equation*}
        H^s(\mathbb{R}^d) = \big\{f\in L^2(\mathbb{R}^d) : \int_{\mathbb{R}^d}(1+\vert \xi\vert^{2s})|\widehat{f}(\xi)|^2\d \xi < +\infty\big\},
    \end{equation*}
    where $\widehat{f}$ denotes the Fourier transform of $f$. We note that, the fractional Sobolev space $H^s$ endowed with the norm
    \begin{equation*}\label{Norm H^s}
        \Vert f\Vert_{H^s}:=\bigg(\int_{\mathbb{R}^d}(1+\vert \xi\vert^{2s})|\widehat{f}(\xi)|^2\d \xi\bigg)^{\frac{1}{2}},\quad\text{for } f\in H^s(\mathbb{R}^d),
    \end{equation*}
    is a Hilbert space.
    \item For $s>0$, $(-\Delta)^s$ denotes the fractional Laplacian defined by
    \begin{equation*}
    (-\Delta)^{s}f = \mathcal{F}^{-1}(\vert\xi\vert^{2s}(\widehat{f})),
    \end{equation*}
    for all $\xi\in \mathbb{R}^d$. In other words, the fractional Laplacian $(-\Delta)^s$ can be viewed as the  Fourier multiplier with symbol $\vert \xi\vert^{2s}$. With this definition and the Plancherel theorem, the fractional Sobolev space can be defined as:
    \begin{equation*}
        H^{s}(\mathbb{R}^{d})=\big\{ f\in L^{2}(\mathbb{R}^{d}): (-\Delta)^{\frac{s}{2}}f \in L^{2}(\mathbb{R}^{d})\big\},
    \end{equation*}
    moreover, the norm
    \begin{equation*}
        \Vert f\Vert_{H^{s}}:=\Vert f\Vert_{L^2}+\Vert (-\Delta)^{\frac{s}{2}}f\Vert_{L^2},
    \end{equation*}
    is equivalent to the one defined in \eqref{Norm H^s}. We refer the reader to \cite{DPV12,Gar18,Kwa17} for more details and alternative definitions.
    \item We also recall the well-known fractional Sobolev inequality. For $d\in \mathbb{N}_0$ and $s\in \mathbb{R}_+$, let $d>2s$ and $q=\frac{2d}{d-2s}$. Then, the estimate
    \begin{equation}\label{Sobolev estimate}
        \Vert f\Vert_{L^q} \leq C(d,s)\Vert (-\Delta)^{\frac{s}{2}}f\Vert_{L^2},
    \end{equation}
    holds for all $f\in H^{s}(\mathbb{R}^d)$, where the constant $C$ depends only on the dimension $d$ and the order $s$.
    \end{itemize}
    The following fractional version of the Gagliardo-Nirenberg-Sobolev inequality is frequently used throughout this paper. We first remind the reader that for $\Omega$ being a standard domain in $\mathbb{R}^d$ (i.e. $\Omega = \mathbb{R}^d$, or a Lipschitz bounded domain in $\mathbb{R}^d$), the notation $W^{s,p}(\Omega)$ refers to the usual Sobolev spaces of fractional order $s$ based on $L^p(\Omega)$ and defined as follows. For fixed $s\in (0,1)$ and any $p\in [1,+\infty)$, we have
    \begin{equation*}
        W^{s,p}(\Omega) = \Big\{ f\in L^p (\Omega): \frac{|f(x)-f(y)|}{|x-y|^{\frac{d}{p}+s}}\in L^p (\Omega \times \Omega)\Big\}.
    \end{equation*}
    When endowed with the natural norm
    \begin{equation*}
        \Vert f \Vert_{W^{s,p}(\Omega)}:= \Big(\Vert f \Vert_{L^p (\Omega)}^p + [f]_{W^{s,p}(\Omega)}^p\Big)^{\frac{1}{p}},
    \end{equation*}
    where
    \begin{equation*}
        [f]_{W^{s,p}(\Omega)} := \bigg(\int_{\Omega} \int_{\Omega} \bigg( \frac{|f(x)-f(y)|}{|x-y|^{\frac{d}{p}+s}} \bigg)^p\d x\d y\bigg)^{\frac{1}{p}},
    \end{equation*}
    is the so-called Gagliardo norm, $W^{s,p}(\Omega)$ is a Banach space.

    In the case when $s>1$ and it is not an integer, we write $s=m+\sigma$, where $m$ is integer and $\sigma \in (0,1)$. Here, $W^{s,p}(\Omega)$ is defined by
    \begin{equation*}
        W^{s,p}(\Omega):= \big\{ f\in W^{m,p}(\Omega): \partial^{\alpha}f \in W^{\sigma,p}(\Omega),~\text{for any}~\alpha : |\alpha|=m \big\},
    \end{equation*}
    where $|\alpha|$ denotes the length of the multi-index $\alpha$. This space is a Banach space with respect to the norm
    \begin{equation*}
        \Vert f \Vert_{W^{s,p}(\Omega)}:= \Big( \Vert f \Vert^p_{W^{m,p}(\Omega)} + \sum_{|\alpha|=m}\Vert D^{\alpha}f \Vert_{W^{\sigma,p}(\Omega)}^p \Big)^{\frac{1}{p}}.
    \end{equation*}
    We refer the reader to \cite{DPV12} for more details.
    
\begin{prop}[\textbf{Fractional GNS inequality}, e.g. Theorem 1. \cite{BM19}]\label{Proposition GNS inequality}
    Let $\Omega$ be a standard domain in $\mathbb{R}^d$. Then, for $r,s_1,s_2,p_1,p_2,q,\theta,d$ satisfying
    \begin{equation*}
        0\leq s_1\leq s_2,~r\geq 0,~ 1\leq p_1,p_2,q\leq \infty,~(s_1,p_1)\neq(s_2,p_2),~\theta \in (0,1),
    \end{equation*}
    \begin{equation*}
        r<\mu:= \theta s_1 + (1-\theta)s_2,~\frac{1}{q}=\bigg(\frac{\theta}{p_1} + \frac{1-\theta}{p_2}\bigg) - \frac{\mu-r}{d},
    \end{equation*}
    the inequality
    \begin{equation}\label{GNS inequality}
        \big\Vert f\big\Vert_{W^{r,q}(\Omega)} \lesssim \big\Vert f\big\Vert_{W^{s_1,p_1}(\Omega)}^{\theta} \big\Vert f\big\Vert_{W^{s_2,p_2}(\Omega)}^{1-\theta},
    \end{equation}
    holds for all $f\in W^{s_1,p_1}(\Omega) \cap W^{s_2,p_2}(\Omega)$, with the following exceptions, when it fails:
    \begin{enumerate}
        \item[1.] $d=1$, $s_2$ is an integer $\geq 1$, $1<p_1\leq \infty$, $p_2 =1$, $s_1 = s_2 -1+\frac{1}{p_1}$,\\
        $[1<p_1<\infty],~r=s_2 -1$ or $\Big[ s_2 +\frac{\theta}{p_1}-1<r<s_2 +\frac{\theta}{p_1}-\theta \Big]$,
        \item[2.] $d\geq 1,~ s_1<s_2,~ s_1 -\frac{d}{p_1} = s_2 -\frac{d}{p_2}=r$ is an integer, $q=\infty$, $(p_1,p_2)\neq (\infty,1)$ (for every $\theta\in (0,1)$).
    \end{enumerate}
\end{prop}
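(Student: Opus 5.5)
This proposition is not proved in the paper; it is quoted from Brezis and Mironescu \cite{BM19}, Theorem 1. In the paper it functions as a black box. The plan below is how I would reconstruct the main, non-exceptional case in a simplified setting, not how the paper establishes it.

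First, I would reduce to $\Omega=\mathbb{R}^d$. For a Lipschitz bounded domain I would use a bounded extension operator $E: W^{s,p}(\Omega)\to W^{s,p}(\mathbb{R}^d)$ that is simultaneous for all relevant $(s,p)$, for instance Stein's universal extension operator. Then
\begin{equation*}
\Vert f\Vert_{W^{r,q}(\Omega)}\le \Vert Ef\Vert_{W^{r,q}(\mathbb{R}^d)},
\end{equation*}
and the right-hand side of \eqref{GNS inequality} transfers by boundedness of $E$.

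On $\mathbb{R}^d$ I would split the inequality into two steps.

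\textbf{Interpolation step.} Complex or real interpolation gives
\begin{equation*}
\Vert f\Vert_{W^{\mu,p}}\lesssim \Vert f\Vert_{W^{s_1,p_1}}^{\theta}\Vert f\Vert_{W^{s_2,p_2}}^{1-\theta},\qquad \frac1p=\frac{\theta}{p_1}+\frac{1-\theta}{p_2}.
\end{equation*}
For $1<p_i<\infty$ this holds on Bessel potential spaces. For Gagliardo spaces I would work in Besov spaces $B^{s}_{p,p}$ and use the Littlewood--Paley characterisation. There, Hölder's inequality applied dyadic block by dyadic block gives
\begin{equation*}
2^{j\mu}\Vert\Delta_j f\Vert_{L^p}\le \big(2^{js_1}\Vert\Delta_j f\Vert_{L^{p_1}}\big)^{\theta}\big(2^{js_2}\Vert\Delta_j f\Vert_{L^{p_2}}\big)^{1-\theta}.
\end{equation*}

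\textbf{Embedding step.} Sobolev embedding with the loss $\mu-r>0$ gives $W^{\mu,p}\hookrightarrow W^{r,q}$, where $\frac1q=\frac1p-\frac{\mu-r}{d}$. This uses Bernstein's inequality on each dyadic block.

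The main obstacle is the endpoint cases: $p_i\in\{1,\infty\}$, the mismatch between the Besov scales $B^{s}_{p,p}$ and the Triebel--Lizorkin scales $F^{s}_{p,2}$ at integer orders, and $q=\infty$. The strict inequality $r<\mu$ leaves a gap of positive smoothness. I would absorb the change in the fine index (the $\ell^p$ versus $\ell^2$ summation) into that gap via $B^{\mu}_{p,\infty}\hookrightarrow B^{r}_{q,1}$-type embeddings.

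Exactly this absorption breaks down in the two listed exceptions. One is integer-order endpoint embeddings into $L^\infty$. The other is the $d=1$, $p_2=1$ integer-$s_2$ configuration, where $W^{1,1}$ fails to be a Besov-type space. Verifying that these are the only failures requires the counterexamples of \cite{BM19}, and this is the genuinely delicate part.

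For the present paper, only non-exceptional instances of \eqref{GNS inequality} are used. They have $p_i=2$, $d<2s$, and $q$ finite or $r$ non-integer, so only the two-step argument above is actually needed.
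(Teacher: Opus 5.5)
The paper contains no proof of this proposition. It is quoted from Theorem 1 of \cite{BM19} and used once, in the uniqueness proof, with $(s_1,p_1)=(0,2)$, $(s_2,p_2)=(s,2)$, $r=0$, $q=6$. For that instance your two-step argument is correct and self-contained. Plancherel and H\"older in $\xi$ give $\Vert(-\Delta)^{\mu/2}f\Vert_{L^2}\le\Vert f\Vert_{L^2}^{\theta}\Vert(-\Delta)^{s/2}f\Vert_{L^2}^{1-\theta}$ with $\mu=(1-\theta)s=d/3$. Then \eqref{Sobolev estimate}, with $s$ replaced by $d/3<d/2$, gives the $L^6$ bound, and this needs only $d<3s$. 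So for the paper's purposes, this short Hilbert-space argument can stand in for the citation.

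As a reconstruction of the general non-exceptional statement, however, your plan has a genuine gap, in two places.

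First, the interpolation display $\Vert f\Vert_{W^{\mu,p}}\lesssim\Vert f\Vert_{W^{s_1,p_1}}^{\theta}\Vert f\Vert_{W^{s_2,p_2}}^{1-\theta}$ is the loss-free Gagliardo--Nirenberg inequality. It is false whenever $s_2\ge1$ is an integer, $p_2=1$ and $s_2-s_1\le1-\frac{1}{p_1}$. For instance, smoothed indicators $\chi_\varepsilon$ of an interval stay bounded in $L^\infty\cap W^{1,1}(\mathbb{R})$, while $[\chi_\varepsilon]_{W^{1/2,2}}^2\sim\log(1/\varepsilon)$.

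Second, your repair cannot work. Because $\frac1q=\frac1p-\frac{\mu-r}{d}$, you have $\mu-\frac dp=r-\frac dq$ exactly. So $r<\mu$ buys no slack in differential dimension, only in integrability. At equal differential dimension, a Besov embedding can never decrease the fine index, so $B^{\mu}_{p,\infty}\not\hookrightarrow B^{r}_{q,1}$. For example, take $r=0$ and $q<\infty$: a localised $|x|^{-d/q}$ lies in $B^{\mu}_{p,\infty}$ but not in $L^q$, which contains $B^{0}_{q,1}$. Your diagnosis that the two exceptions are exactly where this absorption breaks down is therefore also unfounded.

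The breakdown is not confined to the listed exceptions. Take $d=2$, $(s_1,p_1)=(0,\infty)$, $(s_2,p_2)=(1,1)$, $r=0$, so that $\mu=1-\theta$ and $q=\frac{2}{1-\theta}$. This case is non-exceptional, and the inequality is elementary: $\Vert f\Vert_{L^q}\le\Vert f\Vert_{L^\infty}^{\theta}\Vert f\Vert_{L^2}^{1-\theta}$ and $\Vert f\Vert_{L^2(\mathbb{R}^2)}\lesssim\Vert\nabla f\Vert_{L^1}$. Yet your route fails at both steps. The intermediate bound in $W^{1-\theta,\frac{1}{1-\theta}}$ is false (it is an instance of the first failure above). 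Block-wise H\"older only lands in $B^{1-\theta}_{\frac{1}{1-\theta},\infty}$, which does not embed into $L^q$.

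The missing idea is that the Sobolev embedding must sometimes be applied to the endpoint spaces, or to suitably chosen intermediate points on their Sobolev lines, before interpolating. In this example you embed $W^{1,1}(\mathbb{R}^2)\hookrightarrow L^2$ first and then use H\"older. This has to be combined with Triebel--Lizorkin (Jawerth--Franke) embeddings, which convert the integrability gain $p<q$ into a change of fine index. Doing this case by case, so that every non-exceptional configuration is covered, is the substance of the cited theorem.
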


\subsection{Duhamel's principle}
We prove the following special version of Duhamel's principle that will frequently be used throughout this paper. For more general versions of this principle,  we refer the reader to e.g. \cite{ER18}. Let us consider the following non-homogeneous problem for the first order linear evolution equation
\begin{equation}
    \left\lbrace
    \begin{array}{l}
    u_t(t,x)-Lu(t,x)=f(t,x) ,~~~(t,x)\in\left(0,\infty\right)\times \mathbb{R}^{d},\\
    u(0,x)=u_{0}(x),\,\,\, x\in\mathbb{R}^{d}, \label{Equation Duhamel 2}
    \end{array}
    \right.
\end{equation}
where $L$ is a linear partial differential operator that includes no time derivatives.

\begin{prop}\label{Prop. Duhamel 2}
The solution to the Cauchy problem \eqref{Equation Duhamel 2} is given by
\begin{equation}
    u(t,x)= w(t,x) + \int_0^t v(t,x;s)\d s,\label{Sol Duhamel 2}
\end{equation}
where $w(t,x)$ is the solution to the homogeneous problem
\begin{equation}
    \left\lbrace
    \begin{array}{l}
    w_t(t,x)-Lw(t,x)=0 ,~~~(t,x)\in\left(0,\infty\right)\times \mathbb{R}^{d},\\
    w(0,x)=u_{0}(x),\,\,\, x\in\mathbb{R}^{d},\label{Homog eqn Duhamel 2}
    \end{array}
    \right.
\end{equation}
and $v(t,x;s)$ solves the auxiliary Cauchy problem
\begin{equation}
    \left\lbrace
    \begin{array}{l}
    v_t(t,x;s)-Lv(t,x;s)=0 ,~~~(t,x)\in\left(s,\infty\right)\times \mathbb{R}^{d},\\
    v(s,x;s)=f(s,x),\,\,\, x\in\mathbb{R}^{d},\label{Aux eqn Duhamel 2}
    \end{array}
    \right.
\end{equation}
where $s$ is a time-like parameter varying over $\left(0,\infty\right)$.
\end{prop}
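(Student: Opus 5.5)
The plan is to verify directly that the function $u$ defined in \eqref{Sol Duhamel 2} satisfies both the equation and the initial condition in \eqref{Equation Duhamel 2}. We take for granted that the homogeneous problems \eqref{Homog eqn Duhamel 2} and \eqref{Aux eqn Duhamel 2} are solvable, and that the family $v(t,x;s)$ is regular enough in $(t,s)$ to allow differentiation under the integral sign. This holds, for instance, when $L$ generates a strongly continuous evolution, so that $v(t,\cdot;s)=e^{(t-s)L}f(s,\cdot)$, and $f$ is continuous in time with values in the domain of $L$.

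\textbf{Initial condition.} At $t=0$ the integral $\int_0^0 v(0,x;s)\,\d s$ vanishes. Hence $u(0,x)=w(0,x)=u_0(x)$ by \eqref{Homog eqn Duhamel 2}.

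\textbf{Equation.} Differentiate $u$ in $t$. For the integral term, apply the Leibniz rule for a variable upper limit:
\begin{equation*}
\partial_t\int_0^t v(t,x;s)\,\d s = v(t,x;t) + \int_0^t v_t(t,x;s)\,\d s .
\end{equation*}
By the initial condition in \eqref{Aux eqn Duhamel 2}, $v(t,x;t)=f(t,x)$. For each fixed $s<t$, the equation in \eqref{Aux eqn Duhamel 2} gives $v_t(t,x;s)=Lv(t,x;s)$. Since $L$ is linear and contains no time derivatives, it commutes with integration in $s$, so
\begin{equation*}
\int_0^t Lv(t,x;s)\,\d s = L\int_0^t v(t,x;s)\,\d s .
\end{equation*}
Combining this with $w_t=Lw$ from \eqref{Homog eqn Duhamel 2}, we obtain
\begin{equation*}
u_t = Lw + f + L\int_0^t v\,\d s = Lu + f .
\end{equation*}
This is exactly \eqref{Equation Duhamel 2}.

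\textbf{Main obstacle.} The only delicate point is justifying two interchanges: differentiation with integration (the Leibniz rule), and $L$ with $\int_0^t$. I would handle this by assuming continuity of $v$ and $v_t$ in $(t,s)$ on the set $\{0\le s\le t\}$ with values in a suitable space (for instance $L^2$ or $H^{2s}$, with $v\in C(\{0\le s\le t\};H^{2s})$). The interchange with $L$ then follows by closedness of $L$, and the Leibniz rule follows by the standard difference-quotient argument using uniform continuity on compact time intervals.

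If uniqueness of the solution is also part of what is meant by ``the solution'', it follows from linearity. The difference of two solutions solves \eqref{Homog eqn Duhamel 2} with zero initial data, and so it vanishes by the uniqueness assumed for the homogeneous problem; for the Schrödinger-type operators used later, this uniqueness comes from $L^2$ conservation.
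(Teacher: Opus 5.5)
Your proposal is correct and follows the same route as the paper: you differentiate the Duhamel formula in $t$ with the Leibniz rule, use $v(t,x;t)=f(t,x)$, pull $L$ through the $s$-integral, and read off the initial condition from $w(0,x)=u_0(x)$. The one difference is that you state the regularity hypotheses that justify interchanging $\partial_t$ and $L$ with $\int_0^t$ (continuity of $v$ and $v_t$ in $(t,s)$, closedness of $L$); the paper uses these interchanges without comment.
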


\begin{proof}
Differentiating \eqref{Sol Duhamel 2} with respect to the variable $t$, gives
\begin{equation}
    \partial_t u(t,x)=\partial_t w(t,x) + v(t,x;t) + \int_0^t \partial_t v(t,x;s)\d s.\label{Duhamel proof 2.1}
\end{equation}
We note that $v(t,x;t)=f(t,x)$ by the initial condition in \eqref{Aux eqn Duhamel 2}.
Since the differential operator $L$ is acting only on the variable $x$, by applying $L$ in \eqref{Sol Duhamel 2} we get 
\begin{equation}
    L u(t,x)=L w(t,x) + \int_0^t L v(t,x;s)\d s.\label{Duhamel proof 2.2}
\end{equation}
Combining \eqref{Duhamel proof 2.1} and (\ref{Duhamel proof 2.2}) and using that $w$ and $v$ are the solutions to \eqref{Homog eqn Duhamel 2} and \eqref{Aux eqn Duhamel 2} we obtain
\begin{equation*}
    u_t(t,x) - Lu(t,x)= f(t,x).
\end{equation*}
Observing that $u(0,x)=u_0(x)$ from the initial condition in \eqref{Homog eqn Duhamel 2} completes the proof.
\end{proof}

\subsection{Gronwall's type inequalities} The following integral form of Gronwall's inequality (also referred to as the Bellman inequality) is frequently used in this paper. For the proof, we refer the reader to e.g. \cite[Theorem 1.1.2.]{Qin16}.

\begin{lem}\label{Lemma: Gronwall}
    Let $X(t)$ and $a(t)$ be non-negative, continuous functions on $[0,T]$, such that for all $t\in [0,T]$,
    \begin{equation*}
        X(t) \leq \lambda + \int_{0}^{t} a(s)X(s)\d s,
    \end{equation*}
    where $\lambda$ is a non-negative constant. Then
    \begin{equation*}
        X(t) \leq \lambda \exp{\Big(\int_{0}^{t} a(s)\d s\Big)},
    \end{equation*}
    for all $t\in [0,T]$.
\end{lem}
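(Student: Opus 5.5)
This is the integral Gronwall (Bellman) inequality, and I would prove it by the standard auxiliary-function argument. Set
\begin{equation*}
Y(t) := \lambda + \int_0^t a(s)X(s)\,\d s, \qquad t\in[0,T].
\end{equation*}
Since $a$ and $X$ are continuous, $Y$ is $C^1$ on $[0,T]$ with $Y'(t)=a(t)X(t)$ and $Y(0)=\lambda$. The hypothesis reads $X(t)\le Y(t)$, and $a\ge 0$, so
\begin{equation*}
Y'(t)=a(t)X(t)\le a(t)Y(t).
\end{equation*}
The plan is to turn this differential inequality into an exponential bound for $Y$, and then use $X\le Y$.

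Introduce the integrating factor $A(t):=\int_0^t a(s)\,\d s$ and put $Z(t):=e^{-A(t)}Y(t)$. Then
\begin{equation*}
Z'(t)=e^{-A(t)}\big(Y'(t)-a(t)Y(t)\big)\le 0,
\end{equation*}
so $Z$ is non-increasing on $[0,T]$. Hence $Z(t)\le Z(0)=\lambda$, that is, $Y(t)\le \lambda e^{A(t)}$. Combining this with $X(t)\le Y(t)$ gives
\begin{equation*}
X(t)\le \lambda\exp\Big(\int_0^t a(s)\,\d s\Big), \qquad t\in[0,T],
\end{equation*}
which is the claim.

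There is no real obstacle here. The only points that need care are the following.
\begin{itemize}
\item Differentiability of $Y$ follows from continuity of the integrand $aX$ via the fundamental theorem of calculus. This is exactly where the continuity assumptions on $X$ and $a$ are used.
\item The step from $Y'\le aY$ uses $a\ge0$ together with $X\le Y$.
\item No division by $Y$ is performed, so the case $\lambda=0$ needs no separate treatment. The integrating-factor argument handles it directly and yields $X\equiv0$.
\end{itemize}
Non-negativity of $X$ is not actually needed for the argument, only non-negativity of $a$.
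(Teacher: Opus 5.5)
Your proof is correct. Applying the integrating factor to $Y(t)=\lambda+\int_0^t a(s)X(s)\,\d s$ is the standard proof of this Bellman--Gronwall inequality, and the paper gives no argument of its own, only a citation to a textbook for this result (Qin, Theorem 1.1.2); your side remark is also right that the bound uses only $a\ge 0$, although concluding $X\equiv 0$ when $\lambda=0$ does use $X\ge 0$.
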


\subsection{Energy estimates for the classical solution}
The following lemma is essential in proving our main results. It is stated when the equation coefficients and Cauchy data are regular enough for a classical solution to exist.

\begin{lem}\label{Lemma1}
Fix $s>0$. Let $g,V\in L^{\infty}(\mathbb{R}^d)$ be non-negative and suppose that $u_0 \in H^{s}(\mathbb{R}^d)\cap L^4(\mathbb{R}^d)$. Then there is a solution $u\in C([0,T];H^{s}(\mathbb{R}^d))$ to the Cauchy problem \eqref{Equation in introduction} that satisfies the estimates
    \begin{equation}\label{Estimate0_lemma1}
        \Vert u(t,\cdot)\Vert_{L^2} = \Vert u_0\Vert_{L^2},
    \end{equation}
    \begin{equation}\label{Estimate1_lemma1}
        \Vert u(t,\cdot)\Vert_{H^s} \lesssim \left( 1+\Vert V\Vert_{L^{\infty}}\right)^{\frac{1}{2}}\Vert u_0\Vert_{H^s} + \Vert g\Vert_{L^{\infty}}^{\frac{1}{2}}  \Vert u_0\Vert_{L^4}^{2},
    \end{equation}
    \begin{equation}
    \Vert V^{\frac{1}{2}}(\cdot)u(t,\cdot)\Vert_{L^2} \lesssim \left( 1+\Vert V\Vert_{L^{\infty}}\right)^{\frac{1}{2}}\Vert u_0\Vert_{H^s} + \Vert g\Vert_{L^{\infty}}^{\frac{1}{2}}  \Vert u_0\Vert_{L^4}^{2},
\end{equation}
and
\begin{equation}
    \Vert g^{\frac{1}{4}}(\cdot)u(t,\cdot)\Vert_{L^4} \lesssim \left[\left( 1+\Vert V\Vert_{L^{\infty}}\right)^{\frac{1}{2}}\Vert u_0\Vert_{H^s} + \Vert g\Vert_{L^{\infty}}^{\frac{1}{2}}  \Vert u_0\Vert_{L^4}^{2}\right]^{\frac{1}{2}},
\end{equation}
    for all $t\in [0,T]$. Moreover, if $2s<d<4s$, then the unique solution to \eqref{Equation in introduction} satisfies the estimates
    \begin{equation}\label{Estimate2_lemma1}
        \Vert u(t,\cdot)\Vert_{H^s} \lesssim \left( 1+\Vert V\Vert_{L^{\frac{d}{2s}}}\right)^{\frac{1}{2}}\Vert u_0\Vert_{H^s} + \Vert g\Vert_{L^{\frac{d}{4s-d}}}^{\frac{1}{2}}  \Vert u_0\Vert_{H^s}^{2},
    \end{equation}
    \begin{equation}
    \Vert V^{\frac{1}{2}}(\cdot)u(t,\cdot)\Vert_{L^2} \lesssim \left( 1+\Vert V\Vert_{L^{\frac{d}{2s}}}\right)^{\frac{1}{2}}\Vert u_0\Vert_{H^s} + \Vert g\Vert_{L^{\frac{d}{4s-d}}}^{\frac{1}{2}}  \Vert u_0\Vert_{H^s}^{2},
\end{equation}
and
\begin{equation}
    \Vert g^{\frac{1}{4}}(\cdot)u(t,\cdot)\Vert_{L^4} \lesssim \left[\left( 1+\Vert V\Vert_{L^{\frac{d}{2s}}}\right)^{\frac{1}{2}}\Vert u_0\Vert_{H^s} + \Vert g\Vert_{L^{\frac{d}{4s-d}}}^{\frac{1}{2}}  \Vert u_0\Vert_{H^s}^{2}\right]^{\frac{1}{2}},
\end{equation}
for all $t\in [0,T]$.
\end{lem}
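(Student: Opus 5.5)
The plan is to use the two conservation laws of the equation: mass and energy. Existence is taken as given by standard theory for bounded coefficients; the estimates are then a priori computations, justified by smoothing the data and passing to the limit.

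\emph{Mass.} Multiply the equation in \eqref{Equation in introduction} by $\bar u$, integrate over $\mathbb{R}^d$ and take imaginary parts. The operator $(-\Delta)^s$ is self-adjoint, and $V$ and $g$ are real, so
\begin{equation*}
\int (-\Delta)^s u\,\bar u\,\d x,\qquad \int V|u|^2\,\d x,\qquad \int g|u|^4\,\d x
\end{equation*}
are all real. This gives $\frac{\d}{\d t}\Vert u\Vert_{L^2}^2=0$, which is \eqref{Estimate0_lemma1}.

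\emph{Energy.} Multiply instead by $\bar u_t$, integrate and take real parts. The left-hand side $\Re(i|u_t|^2)$ vanishes, so
\begin{equation*}
E(t)=\Vert(-\Delta)^{s/2}u\Vert_{L^2}^2+\Vert V^{1/2}u\Vert_{L^2}^2+\tfrac12\Vert g^{1/4}u\Vert_{L^4}^4
\end{equation*}
is constant in time.

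\emph{Case $V,g\in L^\infty$.} Bound the initial energy by
\begin{equation*}
E(0)\le \Vert u_0\Vert_{H^s}^2+\Vert V\Vert_{L^\infty}\Vert u_0\Vert_{L^2}^2+\tfrac12\Vert g\Vert_{L^\infty}\Vert u_0\Vert_{L^4}^4 .
\end{equation*}
Since $V,g\ge0$, each of the three terms of $E(t)$ is at most $E(0)$. Adding mass conservation gives
\begin{equation*}
\Vert u(t)\Vert_{H^s}^2\lesssim(1+\Vert V\Vert_{L^\infty})\Vert u_0\Vert_{H^s}^2+\Vert g\Vert_{L^\infty}\Vert u_0\Vert_{L^4}^4 .
\end{equation*}
Taking square roots with $\sqrt{a+b}\le\sqrt a+\sqrt b$ yields \eqref{Estimate1_lemma1}, and the same bound holds for $\Vert V^{1/2}u\Vert_{L^2}$. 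For $\Vert g^{1/4}u\Vert_{L^4}$ we get $\Vert g^{1/4}u\Vert_{L^4}^4\le 2E(0)$, so $\Vert g^{1/4}u\Vert_{L^4}\lesssim E(0)^{1/4}$, which is the stated square root of the bracket.

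\emph{Case $2s<d<4s$.} We bound $E(0)$ using Hölder's inequality and the Sobolev inequality \eqref{Sobolev estimate}, with $q=\frac{2d}{d-2s}$. For the potential term,
\begin{equation*}
\int V|u_0|^2\le\Vert V\Vert_{L^{d/2s}}\Vert u_0\Vert_{L^q}^2\lesssim\Vert V\Vert_{L^{d/2s}}\Vert u_0\Vert_{H^s}^2 .
\end{equation*}
For the nonlinear term, write $\int g|u_0|^4\le\Vert g\Vert_{L^p}\Vert u_0\Vert_{L^{4p'}}^4$. Choosing $4p'=q$ gives $p'=\frac{d}{2(d-2s)}$ and hence $p=\frac{d}{4s-d}$. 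This exponent is admissible, i.e.\ $p\ge1$, exactly when $d<4s$ and $d\ge 2s$. Then
\begin{equation*}
\int g|u_0|^4\lesssim\Vert g\Vert_{L^{d/(4s-d)}}\Vert u_0\Vert_{H^s}^4 ,
\end{equation*}
and repeating the argument of the previous case gives the second set of estimates. Uniqueness in this regime would follow by a Gronwall argument (Lemma \ref{Lemma: Gronwall}) on the difference of two solutions in $L^2$, using the $H^s$ bounds; I regard this as secondary.

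\emph{Main obstacle.} The delicate step is the rigorous justification of the energy identity: $\bar u_t$ is not a priori an admissible test function for a $C([0,T];H^s)$ solution, and existence/global-in-time control must also be secured. I would first establish local well-posedness in $H^s$ by a contraction argument on the Duhamel formulation (Proposition \ref{Prop. Duhamel 2}), with $L=-i((-\Delta)^s+V)$. This requires $H^s$ to control the cubic term, so it is clean when $d<2s$ (algebra property), and uses the Sobolev inequality otherwise. Next I would derive the conservation laws for smooth mollified data, where they are legitimate, and pass to the limit by continuous dependence. The conserved energy bound then prevents blow-up of the $H^s$ norm and extends the solution to all of $[0,T]$.
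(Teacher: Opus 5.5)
Your proposal is essentially the paper's proof: mass conservation from testing with $\bar u$, conservation of the same Hamiltonian from testing with $\bar u_t$, the $L^\infty$ bound on the initial energy, and, for $2s<d<4s$, H\"older plus the Sobolev inequality with $q=\frac{2d}{d-2s}$, which gives exactly the $L^{d/(2s)}$ and $L^{d/(4s-d)}$ norms. The paper takes existence, uniqueness and the energy identity for granted, so your final paragraph goes beyond it, but as sketched it would not close on two points: (i) multiplication by a merely bounded $g$ does not preserve $H^s$, so a contraction for $g|u|^2u$ in $C([0,T];H^s)$ fails even when $d<2s$, and you would first have to smooth $g$ and pass to the limit; (ii) for $d>2s$ the cubic difference has no $L^2$-Lipschitz bound in terms of $H^s$ norms, so your $L^2$ Gronwall uniqueness argument would need Strichartz-type estimates.
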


\begin{proof}
    We multiply the equation in \eqref{Equation in introduction} by $-i$ to get
    \begin{equation*}
        u_{t}(t,x) + i(-\Delta)^{s} u(t,x) + iV(x)u(t,x) + ig(x)\vert u(t,x)\vert^2 u(t,x)=0.
    \end{equation*}
    Multiplying the obtained equation by $\overline{u}$ and integrating with respect to the variable $x$ over $\mathbb{R}^d$ and taking the real part, we easily see that
    \begin{equation*}
        \Re\langle u_t(t,\cdot),u(t,\cdot)\rangle_{L^2} = \frac{1}{2}\partial_{t}\langle u(t,\cdot),u(t,\cdot)\rangle_{L^2} = \frac{1}{2}\partial_{t}\Vert u(t,\cdot)\Vert_{L^2}^2,
    \end{equation*}
    and
    \begin{equation*}
        \Re\Big( i\langle (-\Delta)^{s} u(t,\cdot),u(t,\cdot)\rangle_{L^2} + i\langle V(\cdot) u(t,\cdot),u(t,\cdot)\rangle_{L^2} + i\langle g(\cdot)\vert u(t,\cdot)\vert^2  u(t,\cdot),u(t,\cdot)\rangle_{L^2} \Big) = 0.
    \end{equation*}
    The first conservation law for the system follows and we get
    \begin{equation}\label{Estim0_proof1}
        \Vert u(t,\cdot)\Vert_{L^2} = \Vert u_0\Vert_{L^2},
    \end{equation}
    for all $t\in [0,T]$. Let us derive the second conservation law. For this, we multiply the equation in (\ref{Equation in introduction}) by $\overline{u_t}$, integrate over $\mathbb{R}^d$ and take the real part, to get
\begin{align*}
    \Re\langle i\partial_{t}u(t,\cdot),\partial_{t}u(t,\cdot)\rangle_{L^2} = & \Re\Big(\langle(-\Delta)^{s}u(t,\cdot),\partial_{t}u(t,\cdot)\rangle_{L^2} +  \langle V(\cdot)u(t,\cdot),\partial_{t}u(t,\cdot)\rangle_{L^2} \\
    & + \langle g(\cdot)\vert u(t,\cdot)\vert^2 u(t,\cdot),\partial_{t}u(t,\cdot)\rangle_{L^2} \Big).
\end{align*}
We have
\begin{equation*}
    \Re\langle i\partial_{t}u(t,\cdot),\partial_{t}u(t,\cdot)\rangle_{L^2} = 0,
\end{equation*}
\begin{equation*}
    \Re\langle(-\Delta)^{s}u(t,\cdot),\partial_{t}u(t,\cdot)\rangle_{L^2} = \frac{1}{2}\partial_{t}\Vert (-\Delta)^{\frac{s}{2}}u(t,\cdot)\Vert_{L^2}^{2},
\end{equation*}
\begin{equation*}
    \Re\langle V(\cdot)u(t,\cdot),\partial_{t}u(t,\cdot)\rangle_{L^2} = \frac{1}{2}\partial_{t}\Vert V^{\frac{1}{2}}(\cdot)u(t,\cdot)\Vert_{L^2}^{2},
\end{equation*}
and
\begin{equation*}
    \Re\langle g(\cdot)\vert u(t,\cdot)\vert^2 u(t,\cdot),\partial_{t}u(t,\cdot)\rangle_{L^2} = \frac{1}{4}\partial_{t}\Vert g^{\frac{1}{4}}(\cdot)u(t,\cdot)\Vert_{L^4}^{4}.
\end{equation*}
The last identity comes from the fact that for all $(t,x)\in [0,T]\times \mathbb{R}^d$, we have
\begin{equation*}
    \Re\left(\vert u(t,x)\vert^2 u(t,x)\overline{u}_{t}(t,x)\right) = \frac{1}{4}\partial_{t}\vert u(t,x)\vert^{4}.
\end{equation*}
Then, the Hamiltonian of the system
\begin{equation*}
    H(t) := \Vert (-\Delta)^{\frac{s}{2}}u(t,\cdot)\Vert_{L^2}^{2} + \Vert V^{\frac{1}{2}}(\cdot)u(t,\cdot)\Vert_{L^2}^{2} + \frac{1}{2}\Vert g^{\frac{1}{4}}(\cdot)u(t,\cdot)\Vert_{L^4}^{4},
\end{equation*}
is conserved in time, that is
\begin{equation*}
    H(t) = H(0),
\end{equation*}
for all $t\in[0,T]$. By the remark that $\Vert V^{\frac{1}{2}}(\cdot)u(t,\cdot)\Vert_{L^2}^{2}$ and $\Vert g^{\frac{1}{4}}(\cdot)u(t,\cdot)\Vert_{L^4}^{4}$ can be estimated by
\begin{equation*}
    \Vert V^{\frac{1}{2}}(\cdot)u_0(\cdot)\Vert_{L^2}^{2} \leq \Vert V\Vert_{L^{\infty}}  \Vert u_0\Vert_{L^2}^{2},
\end{equation*}
and
\begin{equation*}
    \Vert g^{\frac{1}{4}}(\cdot)u_0(\cdot)\Vert_{L^4}^{4} \leq \Vert g\Vert_{L^{\infty}}  \Vert u_0\Vert_{L^4}^{4},
\end{equation*}
respectively, we get
\begin{equation}\label{Estim1_proof1}
    \Vert (-\Delta)^{\frac{s}{2}}u(t,\cdot)\Vert_{L^2} \lesssim \left( 1+\Vert V\Vert_{L^{\infty}}\right)^{\frac{1}{2}}\Vert u_0\Vert_{H^s} + \Vert g\Vert_{L^{\infty}}^{\frac{1}{2}}  \Vert u_0\Vert_{L^4}^{2},
\end{equation}
\begin{equation}\label{Estim2_proof1}
    \Vert V^{\frac{1}{2}}(\cdot)u(t,\cdot)\Vert_{L^2} \lesssim \left( 1+\Vert V\Vert_{L^{\infty}}\right)^{\frac{1}{2}}\Vert u_0\Vert_{H^s} + \Vert g\Vert_{L^{\infty}}^{\frac{1}{2}}  \Vert u_0\Vert_{L^4}^{2},
\end{equation}
and
\begin{equation}\label{Estim3_proof1}
    \Vert g^{\frac{1}{4}}(\cdot)u(t,\cdot)\Vert_{L^4} \lesssim \left[\left( 1+\Vert V\Vert_{L^{\infty}}\right)^{\frac{1}{2}}\Vert u_0\Vert_{H^s} + \Vert g\Vert_{L^{\infty}}^{\frac{1}{2}}  \Vert u_0\Vert_{L^4}^{2}\right]^{\frac{1}{2}},
\end{equation}
proving the first part of the lemma. Now, let $2s<d<4s$. We repeat the reasoning above, arriving at the conservation of the Hamiltonian
\begin{equation*}
    H(t) := \Vert (-\Delta)^{\frac{s}{2}}u(t,\cdot)\Vert_{L^2}^{2} + \Vert V^{\frac{1}{2}}(\cdot)u(t,\cdot)\Vert_{L^2}^{2} + \frac{1}{2}\Vert g^{\frac{1}{4}}(\cdot)u(t,\cdot)\Vert_{L^4}^{4},
\end{equation*}
that is
\begin{equation}\label{Conserv. Hamiltonian}
    H(t)= \Vert (-\Delta)^{\frac{s}{2}}u_0\Vert_{L^2}^{2} + \Vert V^{\frac{1}{2}}(\cdot)u_0\Vert_{L^2}^{2} + \frac{1}{2}\Vert g^{\frac{1}{4}}(\cdot)u_0\Vert_{L^4}^{4},
\end{equation}
for all $t\in [0,T]$. By applying H\"older's inequality, the second and third terms in \eqref{Conserv. Hamiltonian} are estimated by
\begin{equation*}
    \Vert V^{\frac{1}{2}}(\cdot)u_0\Vert_{L^2} \leq \Vert V^{\frac{1}{2}}\Vert_{L^p} \Vert u_0\Vert_{L^q} \leq \Vert V\Vert^{\frac{1}{2}}_{L^{\frac{p}{2}}} \Vert u_0\Vert_{L^q},
\end{equation*}
and
\begin{equation*}
    \Vert g^{\frac{1}{4}}(\cdot)u_0\Vert_{L^4} \leq \Vert g^{\frac{1}{4}}\Vert_{L^{p'}} \Vert u_0\Vert_{L^{q'}} \leq \Vert g\Vert^{\frac{1}{4}}_{L^{\frac{p'}{4}}} \Vert u_0\Vert_{L^{q'}},
\end{equation*}
with $p,q,p',q'\in (0,\infty)$, such that $\frac{1}{2}=\frac{1}{p} + \frac{1}{q}$ and $\frac{1}{4}=\frac{1}{p'} + \frac{1}{q'}$. Now, if we choose $q=q' = \frac{2d}{d-2s}$ and consequently $p = \frac{d}{s}$ and $p'=\frac{4d}{4s-d}$, then, Sobolev inequality applies and we get
\begin{equation}\label{Estim4_proof1}
    \Vert V^{\frac{1}{2}}(\cdot)u_0\Vert_{L^2} \lesssim \Vert V\Vert^{\frac{1}{2}}_{L^{\frac{d}{2s}}} \Vert(-\Delta)^{\frac{s}{2}} u_0\Vert_{L^2} \leq \Vert V\Vert^{\frac{1}{2}}_{L^{\frac{d}{2s}}} \Vert u_0\Vert_{H^s},
\end{equation}
and
\begin{equation}\label{Estim5_proof1}
    \Vert g^{\frac{1}{4}}(\cdot)u_0\Vert_{L^4} \lesssim \Vert g\Vert^{\frac{1}{4}}_{L^{\frac{d}{4s-d}}} \Vert(-\Delta)^{\frac{s}{2}} u_0\Vert_{L^2} \leq \Vert g\Vert^{\frac{1}{4}}_{L^{\frac{d}{4s-d}}} \Vert u_0\Vert_{H^s}.
\end{equation}
The desired estimates follow by substituting \eqref{Estim4_proof1} and \eqref{Estim5_proof1} into \eqref{Conserv. Hamiltonian}, ending the proof.
\end{proof}

\section{Very weak well-posedness}
In this section, we consider the irregular case when the equation coefficients and initial data are distributions, and we prove that the Cauchy problem
\begin{equation}\label{Equation in VW well-posedness}
    \left\lbrace
    \begin{array}{l}
    iu_{t}(t,x) = (-\Delta)^{s} u(t,x) + V(x)u(t,x) + g(x)\vert u(t,x)\vert^2 u(t,x) ,~~~(t,x)\in\left[0,T\right]\times \mathbb{R}^{d},\\
    u(0,x)=u_{0}(x),
    \end{array}
    \right.
\end{equation}
is well-posed in a very weak sense. To start with, we regularise $V$, $g$, and the initial data $u_0$ by convolution with a mollifying net 
\begin{equation*}
    \psi_\varepsilon (x)=\omega(\varepsilon)^{-d}\psi\left(x/\omega(\varepsilon)\right),
\end{equation*}
where $\omega(\varepsilon)$ is a positive function converging to $0$ as $\varepsilon\rightarrow 0$ to be chosen later, and $\psi$ is a Friedrichs mollifier, that is $\psi\in C_0^{\infty}(\mathbb{R}^d)$ is non-negative and $\int_{\mathbb{R}^d}\psi(x)\d x=1$. This yields families (nets) of smooth functions $(V_\varepsilon)_{\varepsilon\in(0,1]}=(V\ast \psi_\varepsilon)_{\varepsilon\in(0,1]}$, $(g_\varepsilon)_{\varepsilon\in(0,1]}=(g\ast \psi_\varepsilon)_{\varepsilon\in(0,1]}$, and $(u_{0,\varepsilon})_{\varepsilon\in(0,1]}=(u_{0}\ast \psi_\varepsilon)_{\varepsilon\in(0,1]}$, and accordingly a net of regularised problems, depending on the regularising parameter $\varepsilon \in (0,1]$. That is:
    \begin{equation}\label{Regularised problems}
        \bigg\{
        \begin{array}{l}
        i\partial_{t}u_{\varepsilon}(t,x) = (-\Delta)^{s}u_{\varepsilon}(t,x) + V_{\varepsilon}(x)u_{\varepsilon}(t,x) + g_{\varepsilon}(x)\vert u_{\varepsilon}(t,x)\vert^2 u_{\varepsilon}(t,x)=0,\\
        u_{\varepsilon}(0,x)=u_{0,\varepsilon}(x),
        \end{array}
    \end{equation}
with $(t,x)\in [0,T]\times\mathbb{R}^d$.

The following definitions are needed for proving the existence and uniqueness of very weak solutions to \eqref{Equation in VW well-posedness}. Let $f$ be a function (or distribution), and let $(f_\varepsilon)_{\varepsilon\in(0,1]} = (f\ast \psi_\varepsilon)_{\varepsilon\in(0,1]}$ be the regularisation of $f$ obtained by convolution with a mollifying net $\left(\psi_\varepsilon (x)\right)_{\varepsilon\in (0,1]}=\left(\omega(\varepsilon)^{-d}\psi\left(x/\omega(\varepsilon)\right)\right)_{\varepsilon\in (0,1]}$.

\begin{defn}[\textbf{Moderateness}]\label{Defn moderateness}
Let $X,\Vert \cdot\Vert_X$ be a normed vector space of functions on $\mathbb{R}^d$.
\begin{enumerate}
    \item[\textsc{1.}] A net of functions $(f_\varepsilon)_{\varepsilon\in(0,1]}$ from $X$ is said to be $X$-moderate, if there exist $N\in\mathbb{N}_0$ such that
\begin{equation*}\label{Defn moderateness1}
    \Vert f_\varepsilon\Vert_X \lesssim \omega(\varepsilon)^{-N}.
\end{equation*}
    \item[\textsc{2.}] A net of functions $(f_\varepsilon)_{\varepsilon\in(0,1]}$ from $X\cap Y$ is said to be $X\cap Y$-moderate, if there exist $N\in\mathbb{N}^\ast$ such that
\begin{equation*}
    \max \big\{\Vert f_\varepsilon\Vert_X , \Vert f_\varepsilon\Vert_Y \big\} \lesssim \omega(\varepsilon)^{-N}.\label{Eqn def moderateness2}
\end{equation*}
    \item[\textsc{3.}] For $T>0$, a net of functions $(u_\varepsilon(\cdot,\cdot))_{\varepsilon\in(0,1]}$ from $C\big([0,T];H^{s}(\mathbb{R}^d)\big)$ is said to be $C\big([0,T];H^{s}(\mathbb{R}^d)\big)$-moderate, if there exist $N\in\mathbb{N}_0$ such that
    \begin{equation*}\label{Defn moderateness3}
        \sup_{t\in [0,T]}\Vert u_\varepsilon(t,\cdot)\Vert_{H^s} \lesssim \omega(\varepsilon)^{-N}.
    \end{equation*}
\end{enumerate}
For the third definition of moderateness, we will shortly write $C$-moderate instead of $C\big([0,T];H^{s}(\mathbb{R}^d)\big)$-moderate.
\end{defn}

\begin{rem}
    We should highlight here that moderateness is a natural assumption for distributions. Indeed, if for instance, we consider the Dirac delta function or its powers (we understand powers of delta function not as distributions, but in the sense of their representatives at the level of regularisation), then by regularisation with a mollifying net, for $k\in \mathbb{N}_0$, we get:
    \begin{equation*}
        \delta^k \ast \psi_{\varepsilon}(x) = \varepsilon^{-k d}\psi^{k}(\varepsilon^{-1}x) \leq C\varepsilon^{-k d},
    \end{equation*}
    for $\omega(\varepsilon)=\varepsilon$.
\end{rem}

\subsection{Existence of very weak solutions}
We are now in position to introduce the notion of very weak solution adapted to our problem.

\begin{defn}[\textbf{Very weak solution}]\label{Defn1 V.W.S}
    A net of functions $(u_{\varepsilon})_{\varepsilon}\in C([0,T];H^{s}(\mathbb{R}^d))$ is said to be a very weak solution to the Cauchy problem (\ref{Equation in VW well-posedness}), if there exist
    \begin{itemize}
        \item $L^{\infty}(\mathbb{R}^d)$-moderate regularisations $(V_{\varepsilon})_{\varepsilon}$ and $(g_{\varepsilon})_{\varepsilon}$ to $V$ and $g$, with $V_{\varepsilon} \geq 0$ and $g_{\varepsilon} \geq 0$,
        \item $H^{s}(\mathbb{R}^d)\cap L^4(\mathbb{R}^d)$-moderate regularisation $(u_{0,\varepsilon})_{\varepsilon}$ to $u_0$,
    \end{itemize}
    such that, $(u_{\varepsilon})_{\varepsilon}$ solving the regularised problems \eqref{Regularised problems} for every $\varepsilon\in (0,1]$, is $C$-moderate.
\end{defn}

\begin{rem}
    Roughly speaking, a very weak solution to \eqref{Equation in VW well-posedness} is the family of weak solutions to \eqref{Regularised problems}, satisfying a uniform moderateness condition.
\end{rem}

We are all set to state our existence result for which the proof is straightforward.

\begin{thm}\label{Existence theorem}
    Assume that there exists $L^{\infty}(\mathbb R^d)$-moderate regularisations to $V$ and $g$, with $V_{\varepsilon} \geq 0$ and $g_{\varepsilon} \geq 0$, and $H^{s}(\mathbb{R}^d)\cap L^4(\mathbb{R}^d)$-moderate regularisation to $u_0$. Then, the Cauchy problem \eqref{Equation in VW well-posedness} has a very weak solution.
\end{thm}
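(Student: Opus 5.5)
The plan is to verify directly the conditions of Definition \ref{Defn1 V.W.S}, using the energy estimates of Lemma \ref{Lemma1} applied to each regularised problem \eqref{Regularised problems}. Fix the given moderate regularisations $(V_\varepsilon)_\varepsilon$, $(g_\varepsilon)_\varepsilon$ and $(u_{0,\varepsilon})_\varepsilon$. By hypothesis there are $N_1,N_2,N_3\in\mathbb{N}_0$ with
\begin{equation*}
\Vert V_\varepsilon\Vert_{L^\infty}\lesssim \omega(\varepsilon)^{-N_1},\quad \Vert g_\varepsilon\Vert_{L^\infty}\lesssim \omega(\varepsilon)^{-N_2},\quad \max\{\Vert u_{0,\varepsilon}\Vert_{H^s},\Vert u_{0,\varepsilon}\Vert_{L^4}\}\lesssim \omega(\varepsilon)^{-N_3}.
\end{equation*}

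For each fixed $\varepsilon\in(0,1]$, the data of \eqref{Regularised problems} satisfy the hypotheses of Lemma \ref{Lemma1}. Indeed, $V_\varepsilon,g_\varepsilon\in L^\infty(\mathbb{R}^d)$ are non-negative, and $u_{0,\varepsilon}\in H^s(\mathbb{R}^d)\cap L^4(\mathbb{R}^d)$. Hence the lemma provides a solution $u_\varepsilon\in C([0,T];H^s(\mathbb{R}^d))$. Its first part, which needs no relation between $d$ and $s$, gives for all $t\in[0,T]$
\begin{equation*}
\Vert u_\varepsilon(t,\cdot)\Vert_{H^s}\lesssim (1+\Vert V_\varepsilon\Vert_{L^\infty})^{\frac12}\Vert u_{0,\varepsilon}\Vert_{H^s}+\Vert g_\varepsilon\Vert_{L^\infty}^{\frac12}\Vert u_{0,\varepsilon}\Vert_{L^4}^2 .
\end{equation*}
The implicit constant does not depend on $\varepsilon$: it comes only from the equivalence of the $H^s$ norms and from the conservation laws \eqref{Estimate0_lemma1} and of the Hamiltonian. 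Strictly speaking, the $H^s$ norm is recovered by adding the conserved $L^2$ norm $\Vert u_{0,\varepsilon}\Vert_{L^2}\le\Vert u_{0,\varepsilon}\Vert_{H^s}$ to \eqref{Estim1_proof1}, which is harmless.

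Inserting the moderateness bounds gives
\begin{equation*}
\sup_{t\in[0,T]}\Vert u_\varepsilon(t,\cdot)\Vert_{H^s}\lesssim \omega(\varepsilon)^{-\frac{N_1}{2}-N_3}+\omega(\varepsilon)^{-\frac{N_2}{2}-2N_3}\lesssim \omega(\varepsilon)^{-N},
\end{equation*}
with $N:=\lceil \max\{N_1,N_2\}/2\rceil+2N_3$. This uses that $\omega(\varepsilon)$ is bounded on $(0,1]$; one may assume $\omega(\varepsilon)\le 1$ after shrinking the range or rescaling, so that lower powers of $\omega(\varepsilon)^{-1}$ are dominated by higher ones. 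Thus $(u_\varepsilon)_\varepsilon$ is $C$-moderate, and by Definition \ref{Defn1 V.W.S} it is a very weak solution.

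The only delicate point is the one assumed in Lemma \ref{Lemma1}, namely that each regularised problem really has a global solution in $C([0,T];H^s)$. I will take this from the lemma as stated. If more justification were needed, I would argue as follows. In the regime $d<2s$ of the title, $H^s\hookrightarrow L^\infty$, so the cubic term is locally Lipschitz on $H^s$. A contraction argument via Duhamel's principle (Proposition \ref{Prop. Duhamel 2}) with the unitary group $e^{-it(-\Delta)^s}$ then gives local existence. The a priori $H^s$ bound above rules out blow-up on $[0,T]$, so the solution extends to the whole interval.
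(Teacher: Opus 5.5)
Your proposal is correct and follows the paper's proof. Both apply the first, dimension-free part of Lemma \ref{Lemma1} to each regularised problem \eqref{Regularised problems}, then insert the moderateness bounds to get $\sup_{t\in[0,T]}\Vert u_\varepsilon(t,\cdot)\Vert_{H^s}\lesssim\omega(\varepsilon)^{-\max\{N_1,N_2\}/2-2N_3}$. Your extra care only tightens details the paper leaves implicit: rounding the exponent up to an integer (which needs $\omega(\varepsilon)$ bounded) and adding back the conserved $L^2$ norm to recover the full $H^s$ norm.
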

\begin{rem}
    The assumptions on the existence of such $V_{\varepsilon}, g_{\varepsilon}$ and $u_{0,\varepsilon}$ are satisfied when, for example, $V, g, u_0 \in \mathcal{E}'(\mathbb{R}^d),$ with $V\ge 0$ and $g\ge 0$ in the sense of distributions.
\end{rem}
\begin{proof}
    Let $V$, $g$ and $u_0$ satisfy the moderateness assumptions for their regularising families. Then, there exist $N_1 , N_2 , N_3 \in \mathbb{N}$, such that
    \begin{equation*}
        \Vert V_{\varepsilon}\Vert_{L^\infty} \lesssim \omega(\varepsilon)^{-N_1},~~\Vert g_{\varepsilon}\Vert_{L^\infty} \lesssim \omega(\varepsilon)^{-N_2},
    \end{equation*}
    and
    \begin{equation*}
        \Vert u_{0,_{\varepsilon}}\Vert_{H^{s}\cap L^4} \lesssim \omega(\varepsilon)^{-N_3}.
    \end{equation*}
    It follows from \eqref{Estimate1_lemma1} that
    \begin{equation*}
        \Vert u_{\varepsilon}(t,\cdot)\Vert_{H^s} \lesssim \omega(\varepsilon)^{-\frac{\max\{N_1 ,N_2\}}{2}-2N_3},
    \end{equation*}
    uniformly in $t\in [0,T]$. The net $(u_{\varepsilon})_{\varepsilon}$ is then $C$-moderate and the existence of a very weak solution follows, ending the proof.
\end{proof}

\subsection{Uniqueness}
In all what follows, we will only consider the case when $d<2s$. When the equation coefficients $V$ and $g$ belong to $L^{\infty}(\mathbb{R}^d)$, we prove uniqueness of the very weak solution. Before stating our result, we need the following definition:

\begin{defn}[\textbf{Negligibility}]\label{Defn negligibility}
    Let $ \left(X,\Vert\cdot\Vert_X\right) $ be a normed vector space. A net of functions $(f_\varepsilon)_{\varepsilon\in(0,1]}$ from $X$ is said to be $X$-negligible, if the estimate
    \begin{equation}\label{Negligibility formula}
        \Vert f_\varepsilon\Vert_X \lesssim \varepsilon^k,
    \end{equation}
    is valid for any $k>0$ and $\varepsilon \in (0,1]$.
\end{defn}

We prove uniqueness of the very weak solution in the following sense:
\begin{defn}[\textbf{Uniqueness}]\label{Defn1 uniqueness}
    We say that the Cauchy problem \eqref{Equation in VW well-posedness}, has a unique very weak solution, if for all $L^{\infty}$ nets $(V_{\varepsilon})_{\varepsilon}$, $(\Tilde{V}_{\varepsilon})_{\varepsilon}$ and $(g_{\varepsilon})_{\varepsilon}$, $(\Tilde{g}_{\varepsilon})_{\varepsilon}$ for the equation coefficients $V$ and $g$, and for any $L^2$ nets $(u_{0,\varepsilon})_{\varepsilon}$, $(\Tilde{u}_{0,\varepsilon})_{\varepsilon}$ for the initial function $u_0$, such that the nets $(V_{\varepsilon}-\Tilde{V}_{\varepsilon})_{\varepsilon}$, $(g_{\varepsilon}-\Tilde{g}_{\varepsilon})_{\varepsilon}$, and $(u_{0,\varepsilon}-\Tilde{u}_{0,\varepsilon})_{\varepsilon}$ are $\big\{L^{\infty}(\mathbb{R}^d),L^{\infty}(\mathbb{R}^d),L^{2}(\mathbb{R}^d)\big\}$-negligible, it follows that the net
    \begin{equation*}
        \big(u_{\varepsilon}(t,\cdot)-\Tilde{u}_{\varepsilon}(t,\cdot)\big)_{\varepsilon}
    \end{equation*}
    is $L^2(\mathbb{R}^d)$-negligible for all $t\in [0,T]$, where $(u_{\varepsilon})_{\varepsilon}$ and $(\Tilde{u}_{\varepsilon})_{\varepsilon}$ are the nets of solutions to the regularised problems
    \begin{equation}\label{Regularised problems for u}
        \bigg\{
        \begin{array}{l}
        i\partial_{t}u_{\varepsilon}(t,x) = (-\Delta)^{s}u_{\varepsilon}(t,x) + V_{\varepsilon}(x)u_{\varepsilon}(t,x) + g_{\varepsilon}(x)\vert u_{\varepsilon}(t,x)\vert^2 u_{\varepsilon}(t,x),\\
        u_{\varepsilon}(0,x)=u_{0,\varepsilon}(x),
        \end{array}
    \end{equation}
    and
    \begin{equation}\label{Regularised problems for u tilde}
        \bigg\{
        \begin{array}{l}
        i\partial_{t}\tilde{u}_{\varepsilon}(t,x) = (-\Delta)^{s}\tilde{u}_{\varepsilon}(t,x) + \tilde{V}_{\varepsilon}(x)\tilde{u}_{\varepsilon}(t,x) + \tilde{g}_{\varepsilon}(x)\vert \tilde{u}_{\varepsilon}(t,x)\vert^2 \tilde{u}_{\varepsilon}(t,x),\\
        \tilde{u}_{\varepsilon}(0,x)=\tilde{u}_{0,\varepsilon}(x),
        \end{array}
    \end{equation}
    for $(t,x) \in [0,T]\times\mathbb{R}^d$, respectively.
\end{defn}

\begin{rem}
    The uniqueness will be shown in the sense that very weak solutions are independent of negligibly different regularisations. In other words, in the sense that negligible changes in the regularisations of the equation coefficients and initial data, lead to negligible changes in the corresponding very weak solutions.
\end{rem}

\begin{thm}\label{Uniqueness theorem}
    Let $T>0$. Let $d\ge 1$, and let $s>\frac{d}{2}$. Assume that $V, g \in \mathcal{E}'(\mathbb{R}^d)$ and $V,g \geq 0$. Under the assumptions of Theorem \ref{Existence theorem}, the very weak solution to the Cauchy problem \eqref{Equation in VW well-posedness} is unique.
\end{thm}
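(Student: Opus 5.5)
Set $U_\varepsilon := u_\varepsilon-\tilde u_\varepsilon$. Subtracting \eqref{Regularised problems for u tilde} from \eqref{Regularised problems for u} gives
\begin{equation*}
i\partial_t U_\varepsilon = (-\Delta)^s U_\varepsilon + \tilde V_\varepsilon U_\varepsilon + f_\varepsilon,\qquad U_\varepsilon(0)=u_{0,\varepsilon}-\tilde u_{0,\varepsilon},
\end{equation*}
where
\begin{equation*}
f_\varepsilon = (V_\varepsilon-\tilde V_\varepsilon)u_\varepsilon + (g_\varepsilon-\tilde g_\varepsilon)|u_\varepsilon|^2u_\varepsilon + \tilde g_\varepsilon\big(|u_\varepsilon|^2u_\varepsilon-|\tilde u_\varepsilon|^2\tilde u_\varepsilon\big).
\end{equation*}
The plan is a Gronwall-type estimate for $\Vert U_\varepsilon(t)\Vert_{L^2}$, using that $d<2s$ embeds $H^s$ into $L^\infty$. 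I will use the Duhamel representation of Proposition \ref{Prop. Duhamel 2} with $L=-i((-\Delta)^s+\tilde V_\varepsilon)$. Alternatively, I can multiply by $\overline{U_\varepsilon}$, integrate, and take imaginary parts; this is simpler. The self-adjoint part $(-\Delta)^s+\tilde V_\varepsilon$ then drops out, exactly as in the $L^2$ conservation step of Lemma \ref{Lemma1}, leaving
\begin{equation*}
\partial_t \Vert U_\varepsilon\Vert_{L^2}^2 \le 2\Vert f_\varepsilon\Vert_{L^2}\Vert U_\varepsilon\Vert_{L^2}.
\end{equation*}
Equivalently, the propagator $e^{-it((-\Delta)^s+\tilde V_\varepsilon)}$ is unitary on $L^2$, so Duhamel gives
\begin{equation*}
\Vert U_\varepsilon(t)\Vert_{L^2}\le \Vert U_\varepsilon(0)\Vert_{L^2}+\int_0^t\Vert f_\varepsilon(\tau)\Vert_{L^2}\,\d\tau.
\end{equation*}

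Next I estimate $f_\varepsilon$ term by term. Since $s>d/2$, the GNS inequality (Proposition \ref{Proposition GNS inequality}), or simply the Sobolev embedding $H^s\hookrightarrow L^\infty$ obtained from $\int(1+|\xi|^{2s})^{-1}\d\xi<\infty$, gives
\begin{equation*}
\Vert u_\varepsilon(t)\Vert_{L^\infty}\lesssim \Vert u_\varepsilon(t)\Vert_{H^s},\qquad \Vert \tilde u_\varepsilon(t)\Vert_{L^\infty}\lesssim \Vert \tilde u_\varepsilon(t)\Vert_{H^s}.
\end{equation*}
Both right-hand sides are bounded by $\omega(\varepsilon)^{-N}$ uniformly in $t$, by Lemma \ref{Lemma1} and the moderateness from Theorem \ref{Existence theorem}. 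The hypothesis $V,g\in\mathcal E'$ ensures that both families of regularisations, $(V_\varepsilon,g_\varepsilon)$ and $(\tilde V_\varepsilon,\tilde g_\varepsilon)$, are $L^\infty$-moderate and the data are $H^s\cap L^4$-moderate, so both solution nets are $C$-moderate.

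The first two terms of $f_\varepsilon$ are then bounded by
\begin{equation*}
\Vert V_\varepsilon-\tilde V_\varepsilon\Vert_{L^\infty}\Vert u_0\Vert_{L^2}+\Vert g_\varepsilon-\tilde g_\varepsilon\Vert_{L^\infty}\Vert u_\varepsilon\Vert_{L^\infty}^2\Vert u_{0,\varepsilon}\Vert_{L^2},
\end{equation*}
using $L^2$ conservation \eqref{Estimate0_lemma1}. This is a negligible quantity times a moderate one, hence negligible. For the cubic difference I use the pointwise bound
\begin{equation*}
\big||a|^2a-|b|^2b\big|\lesssim(|a|^2+|b|^2)|a-b|,
\end{equation*}
which gives
\begin{equation*}
\Vert\tilde g_\varepsilon(|u_\varepsilon|^2u_\varepsilon-|\tilde u_\varepsilon|^2\tilde u_\varepsilon)\Vert_{L^2}\lesssim \Vert\tilde g_\varepsilon\Vert_{L^\infty}\big(\Vert u_\varepsilon\Vert_{L^\infty}^2+\Vert\tilde u_\varepsilon\Vert_{L^\infty}^2\big)\Vert U_\varepsilon\Vert_{L^2}\lesssim \omega(\varepsilon)^{-M}\Vert U_\varepsilon\Vert_{L^2}.
\end{equation*}

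Inserting these bounds into the Duhamel inequality and applying Lemma \ref{Lemma: Gronwall} yields
\begin{equation*}
\Vert U_\varepsilon(t)\Vert_{L^2}\lesssim \big(\text{negligible}\big)\exp\big(CT\omega(\varepsilon)^{-M}\big).
\end{equation*}
This exponential is the main obstacle: a negligible quantity times $\exp(C\omega(\varepsilon)^{-M})$ is not negligible in general. The remedy is the freedom in choosing $\omega(\varepsilon)$ in the mollifier. I will take it logarithmic, for instance $\omega(\varepsilon)=(\log\varepsilon^{-1})^{-1/M}$ up to constants, so that $\exp(CT\omega(\varepsilon)^{-M})\lesssim\varepsilon^{-1}$. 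This choice must be fixed beforehand and applied uniformly, and all moderateness bounds must be taken with respect to this $\omega$. Since negligibility is measured in powers of $\varepsilon$, a negligible net times $\varepsilon^{-1}$ remains negligible. Hence $\Vert U_\varepsilon(t)\Vert_{L^2}\lesssim\varepsilon^k$ for every $k$, uniformly in $t\in[0,T]$, which proves the theorem.
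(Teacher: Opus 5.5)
Your proposal is correct and follows essentially the same route as the paper: a Duhamel/$L^2$-conservation bound for $U_\varepsilon=u_\varepsilon-\tilde u_\varepsilon$, then the embedding $H^s\hookrightarrow L^\infty$ for $d<2s$ to make the cubic difference Lipschitz in $L^2$ with a moderate constant, then Gronwall, and finally a logarithmic choice $\omega(\varepsilon)\sim(\log\varepsilon^{-1})^{-1/M}$ to absorb the exponential. The differences are cosmetic: you put $\tilde V_\varepsilon$ rather than $V_\varepsilon$ in the linear part, and you bound $\Vert |u_\varepsilon|^2u_\varepsilon\Vert_{L^2}$ via $L^\infty$ rather than the paper's GNS estimate in $L^6$; also, the $\Vert u_0\Vert_{L^2}$ in your first source bound should read $\Vert u_{0,\varepsilon}\Vert_{L^2}$.
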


\begin{proof}
    For $T>0$, let $(u_{\varepsilon})_{\varepsilon}$ and $(\Tilde{u}_{\varepsilon})_{\varepsilon}$ be the nets of solutions to \eqref{Regularised problems for u} and \eqref{Regularised problems for u tilde} respectively, and assume that $(V_{\varepsilon}-\Tilde{V}_{\varepsilon})_{\varepsilon}$, $(g_{\varepsilon}-\Tilde{g}_{\varepsilon})_{\varepsilon}$, and $(u_{0,\varepsilon}-\Tilde{u}_{0,\varepsilon})_{\varepsilon}$ are $\big\{L^{\infty}(\mathbb{R}^d),L^{\infty}(\mathbb{R}^d),L^{2}(\mathbb{R}^d)\big\}$-negligible. Let us denote by
    \begin{equation*}
        U_{\varepsilon}(t,x):=u_{\varepsilon}(t,x)-\Tilde{u}_{\varepsilon}(t,x).
    \end{equation*}
    Then, $U$ is a solution to
    \begin{equation}\label{Equation U in uniqueness}
        \bigg\{
        \begin{array}{l}
        i\partial_{t}U_{\varepsilon}(t,x) - (-\Delta)^{s}U_{\varepsilon}(t,x) - V_{\varepsilon}(x)U_{\varepsilon}(t,x) = f_\varepsilon (t,x),~~~(t,x)\in\left[0,T\right]\times \mathbb{R}^{d},\\
        U_{\varepsilon}(0,x)=(u_{0,\varepsilon}-\Tilde{u}_{0,\varepsilon})(x),~~~x\in \mathbb{R}^{d},
        \end{array}
    \end{equation}
    where
    \begin{align*}
        f_\varepsilon (t,x):= & \big( V_{\varepsilon}(x)-\tilde{V}_{\varepsilon}(x)\big)\tilde{u}_{\varepsilon}(t,x) + \big( g_{\varepsilon}(x)-\tilde{g}_{\varepsilon}(x) \big)|\tilde{u}_{\varepsilon}(t,x)|^2 \tilde{u}_{\varepsilon}(t,x)\\
        & + g_{\varepsilon}(x)\Big( |u_{\varepsilon}(t,x)|^2 u_{\varepsilon}(t,x) - |\tilde{u}_{\varepsilon}(t,x)|^2 \tilde{u}_{\varepsilon}(t,x) \Big).
    \end{align*}
    It is easy to see that the homogeneous problem associated to \eqref{Equation U in uniqueness} satisfies the first conservation law derived in Lemma \ref{Lemma1}. Our strategy to handle the nonlinear terms appearing in \eqref{Equation U in uniqueness}, is to take all these terms to the right hand side and to use energy estimates from Lemma \ref{Lemma1}. Thus, by considering $f$ as a source term, and applying Duhamel's principle (see Proposition \ref{Prop. Duhamel 2}), the solution $U$ to \eqref{Equation U in uniqueness} has the representation
    \begin{equation}\label{Duhamel represent. U in uniqueness}
        U_{\varepsilon}(t,x) = W_{\varepsilon}(t,x) + \int_{0}^{t}Y_{\varepsilon}(t,x;\tau)\d \tau,
    \end{equation}
    where $W_{\varepsilon}$ is the solution to the homogeneous problem
    \begin{equation}\label{Equation W in uniqueness}
        \bigg\{
        \begin{array}{l}
        i\partial_{t}W_{\varepsilon}(t,x) - (-\Delta)^{s}W_{\varepsilon}(t,x) - V_{\varepsilon}(x)W_{\varepsilon}(t,x) = 0,~~~(t,x)\in\left[0,T\right]\times \mathbb{R}^{d},\\
        W_{\varepsilon}(0,x)=(u_{0,\varepsilon}-\Tilde{u}_{0,\varepsilon})(x),~~~x\in \mathbb{R}^{d},
        \end{array}
    \end{equation}
    and $Y_{\varepsilon}$ solves
    \begin{equation}\label{Equation Y in uniqueness}
        \bigg\{
        \begin{array}{l}
        i\partial_{t}Y_{\varepsilon}(t,x;\tau) - (-\Delta)^{s}Y_{\varepsilon}(t,x;\tau) - V_{\varepsilon}(x)Y_{\varepsilon}(t,x;\tau) = 0,~~~(t,x)\in\left[\tau,T\right]\times \mathbb{R}^{d},\\
        Y_{\varepsilon}(\tau,x;\tau)= f_{\varepsilon}(\tau,x),~~~x\in\times \mathbb{R}^{d}.
        \end{array}
    \end{equation}
    We take the $L^2$-norm in \eqref{Duhamel represent. U in uniqueness}, and we take into consideration that $t\in [0,T]$, we get
    \begin{equation}\label{Duhamel solution estimate}
        \Vert U_{\varepsilon}(t,\cdot)\Vert_{L^2} \leq \Vert W_{\varepsilon}(t,\cdot)\Vert_{L^2} + \int_{0}^{t}\Vert Y_{\varepsilon}(t,\cdot;\tau)\Vert_{L^2}\d \tau,
    \end{equation}
    where we used Minkowski's integral inequality. We have
    \begin{equation*}
        \Vert W_{\varepsilon}(t,\cdot)\Vert_{L^2} = \Vert u_{0,\varepsilon} - \tilde{u}_{0,\varepsilon}\Vert_{L^2},
    \end{equation*}
    and
    \begin{equation*}
        \Vert Y_{\varepsilon}(t,\cdot;\tau)\Vert_{L^2} = \Vert f_{\varepsilon}(\tau,\cdot)\Vert_{L^2},
    \end{equation*}
    following from the conservation law \eqref{Estimate0_lemma1}. In order to estimate $\Vert f_{\varepsilon}(\tau,\cdot)\Vert_{L^2}$, we proceed as follows:
    \begin{align*}
        \Vert f_{\varepsilon}(\tau,\cdot)\Vert_{L^2} \leq & \underbrace{\big\Vert \big( V_{\varepsilon}(\cdot)-\tilde{V}_{\varepsilon}(\cdot)\big)\tilde{u}_{\varepsilon}(\tau,\cdot) \big\Vert_{L^2}}_{\rm{I}} + \underbrace{\big\Vert \big( g_{\varepsilon}(\cdot)-\tilde{g}_{\varepsilon}(\cdot) \big)|\tilde{u}_{\varepsilon}(\tau,\cdot)|^2 \tilde{u}_{\varepsilon}(\tau,\cdot) \big\Vert_{L^2}}_{\rm{II}}\\ & + \underbrace{\big\Vert g_{\varepsilon}(\cdot)\big( |u_{\varepsilon}(\tau,\cdot)|^2 u_{\varepsilon}(\tau,\cdot) - |\tilde{u}_{\varepsilon}(\tau,\cdot)|^2 \tilde{u}_{\varepsilon}(\tau,\cdot) \big) \big\Vert_{L^2}}_{\rm{III}}.
    \end{align*}
    \textbf{Estimate for $\rm{I}$:} we have
    \begin{equation}
        \big\Vert \big( V_{\varepsilon}(\cdot)-\tilde{V}_{\varepsilon}(\cdot)\big)\tilde{u}_{\varepsilon}(\tau,\cdot) \big\Vert_{L^2} \leq \Vert V_{\varepsilon} - \tilde{V}_{\varepsilon} \Vert_{L^{\infty}} \Vert \tilde{u}_{\varepsilon}(\tau,\cdot) \Vert_{L^2}.
    \end{equation}
    \textbf{Estimate for $\rm{II}$:} we have
    \begin{equation}
        \big\Vert \big( g_{\varepsilon}(\cdot)-\tilde{g}_{\varepsilon}(\cdot)\big)|\tilde{u}_{\varepsilon}(\tau,\cdot)|^2 \tilde{u}_{\varepsilon}(\tau,\cdot) \big\Vert_{L^2} \leq \Vert g_{\varepsilon} - \tilde{g}_{\varepsilon} \Vert_{L^{\infty}} \big\Vert |\tilde{u}_{\varepsilon}(\tau,\cdot)|^2 \tilde{u}_{\varepsilon}(\tau,\cdot) \big\Vert_{L^2}.
    \end{equation}
    To estimate the term $\big\Vert |\tilde{u}_{\varepsilon}(\tau,\cdot)|^2 \tilde{u}_{\varepsilon}(\tau,\cdot) \big\Vert_{L^2}$, first, we remark that $$\big\Vert |\tilde{u}_{\varepsilon}(\tau,\cdot)|^2 \tilde{u}_{\varepsilon}(\tau,\cdot) \big\Vert_{L^2} = \Vert \tilde{u}_{\varepsilon}(\tau,\cdot) \Vert_{L^6}^3.$$ Now, for $d<3s$, Gagliardo-Nirenberg-Sobolev inequality (Proposition \ref{Proposition GNS inequality}), with $r=0, q=6, s_1 =0, p_1 =2, s_2 =s, p_2 =2$, comes into play. We may estimate
    \begin{equation}
        \Vert \tilde{u}_{\varepsilon}(\tau,\cdot) \Vert_{L^6} \lesssim \big\Vert \tilde{u}_{\varepsilon}(\tau,\cdot) \big\Vert_{H^s}^{\frac{d}{3s}} \big\Vert \tilde{u}_{\varepsilon}(\tau,\cdot) \big\Vert_{L^2}^{1-\frac{d}{3s}}.
    \end{equation}
    \textbf{Estimate for $\rm{III}$:} we have
    \begin{align}\label{Estimate III}
        \big\Vert g_{\varepsilon}(\cdot)\big( |u_{\varepsilon}(\tau,\cdot)|^2 u_{\varepsilon}(\tau,\cdot) &- |\tilde{u}_{\varepsilon}(\tau,\cdot)|^2 \tilde{u}_{\varepsilon}(\tau,\cdot) \big) \big\Vert_{L^2}\nonumber\\ &\lesssim \Vert g_{\varepsilon} \Vert_{L^\infty} \big\Vert |u_{\varepsilon}(\tau,\cdot)|^2 u_{\varepsilon}(\tau,\cdot) - |\tilde{u}_{\varepsilon}(s,\cdot)|^2 \tilde{u}_{\varepsilon}(\tau,\cdot) \big\Vert_{L^2}.
    \end{align}
    The $L^2$-norm in the right hand side in \eqref{Estimate III} can be estimated as follows:
    \begin{align*}
        \big\Vert |u_{\varepsilon}(\tau,\cdot)|^2 u_{\varepsilon}(\tau,\cdot) &- |\tilde{u}_{\varepsilon}(\tau,\cdot)|^2 \tilde{u}_{\varepsilon}(\tau,\cdot) \big\Vert_{L^2} = \big\Vert \overline{u}_{\varepsilon}(\tau,\cdot)u_{\varepsilon}^2(\tau,\cdot) - \overline{\tilde{u}}_{\varepsilon}(\tau,\cdot) \tilde{u}_{\varepsilon}^2(\tau,\cdot) \big\Vert_{L^2}\nonumber\\
        & \leq \underbrace{\big\Vert \overline{u}_{\varepsilon}(\tau,\cdot)\big(u_{\varepsilon}(\tau,\cdot) + \tilde{u}_{\varepsilon}(\tau,\cdot)\big)\big(u_{\varepsilon}(\tau,\cdot) - \tilde{u}_{\varepsilon}(\tau,\cdot)\big) \big\Vert_{L^2}}_{\rm{A}} \\
        & \qquad + \underbrace{\big\Vert \tilde{u}_{\varepsilon}^2(\tau,\cdot)\big( \overline{u_{\varepsilon}(\tau,\cdot) - \tilde{u}_{\varepsilon}(\tau,\cdot)} \big) \big\Vert_{L^2}}_{\rm{B}}.
    \end{align*}
    The term $``\rm{A}"$, is estimated as follows
    \begin{equation*}
        A \leq \big\Vert u_{\varepsilon}(\tau,\cdot) \big\Vert_{L^\infty} \big\Vert u_{\varepsilon}(\tau,\cdot) + \tilde{u}_{\varepsilon}(\tau,\cdot) \big\Vert_{L^\infty} \big\Vert u_{\varepsilon}(\tau,\cdot) - \tilde{u}_{\varepsilon}(\tau,\cdot) \big\Vert_{L^2},
    \end{equation*}
    and we use the fact that in the case when $d<2s$, the embedding $H^s (\mathbb{R}^d) \subset L^{\infty}(\mathbb{R}^d)$ holds, to get
    \begin{equation*}
        A \lesssim \big\Vert u_{\varepsilon}(\tau,\cdot) \big\Vert_{H^s} \big\Vert u_{\varepsilon}(\tau,\cdot) + \tilde{u}_{\varepsilon}(\tau,\cdot) \big\Vert_{H^s} \big\Vert u_{\varepsilon}(\tau,\cdot) - \tilde{u}_{\varepsilon}(\tau,\cdot) \big\Vert_{L^2}.
    \end{equation*}
    For $``\rm{B}"$, we argue similarly. We obtain
    \begin{align*}
        \big\Vert \tilde{u}_{\varepsilon}^2(\tau,\cdot)\big( \overline{u_{\varepsilon}(\tau,\cdot) - \tilde{u}_{\varepsilon}(\tau,\cdot)} \big) \big\Vert_{L^2} & \lesssim  \big\Vert \tilde{u}_{\varepsilon}(\tau,\cdot) \big\Vert_{L^\infty}^2 \big\Vert u_{\varepsilon}(\tau,\cdot) - \tilde{u}_{\varepsilon}(\tau,\cdot) \big\Vert_{L^2}\\
        & \lesssim \big\Vert \tilde{u}_{\varepsilon}(\tau,\cdot) \big\Vert_{H^s}^2 \big\Vert u_{\varepsilon}(\tau,\cdot) - \tilde{u}_{\varepsilon}(\tau,\cdot) \big\Vert_{L^2}.
    \end{align*}
    We collect the obtained estimates, to get
    \begin{align*}
        \Vert U_{\varepsilon}(t,&\cdot)\Vert_{L^2} \lesssim \Vert u_{0,\varepsilon} - \tilde{u}_{0,\varepsilon}\Vert_{L^2} + \int_{0}^{t} \Bigg\{ \Vert V_{\varepsilon} - \tilde{V}_{\varepsilon} \Vert_{L^{\infty}} \Vert \tilde{u}_{\varepsilon}(\tau,\cdot) \Vert_{L^2} \\
        & + \Vert g_{\varepsilon} - \tilde{g}_{\varepsilon} \Vert_{L^{\infty}}\big\Vert \tilde{u}_{\varepsilon}(\tau,\cdot) \big\Vert_{H^s}^{\frac{d}{s}} \big\Vert \tilde{u}_{\varepsilon}(\tau,\cdot) \big\Vert_{L^2}^{3-\frac{d}{s}} \\
        & + \Vert g_{\varepsilon} \Vert_{L^\infty}\bigg[\big\Vert u_{\varepsilon}(\tau,\cdot) \big\Vert_{H^s} \big\Vert u_{\varepsilon}(\tau,\cdot) + \tilde{u}_{\varepsilon}(\tau,\cdot) \big\Vert_{H^s} \big\Vert u_{\varepsilon}(\tau,\cdot) - \tilde{u}_{\varepsilon}(\tau,\cdot) \big\Vert_{L^2}\\
        & + \big\Vert \tilde{u}_{\varepsilon}(\tau,\cdot) \big\Vert_{H^s}^2 \big\Vert u_{\varepsilon}(\tau,\cdot) - \tilde{u}_{\varepsilon}(\tau,\cdot) \big\Vert_{L^2}\bigg] \Bigg\}\d \tau.
    \end{align*}
    Now, by using the triangle inequality for the term $\big\Vert u_{\varepsilon}(\tau,\cdot) + \tilde{u}_{\varepsilon}(\tau,\cdot) \big\Vert_{H^s}$ and the fact that $(V_{\varepsilon}-\Tilde{V}_{\varepsilon})_{\varepsilon}$, $(g_{\varepsilon}-\Tilde{g}_{\varepsilon})_{\varepsilon}$, and $(u_{0,\varepsilon}-\Tilde{u}_{0,\varepsilon})_{\varepsilon}$ are $L^{\infty}(\mathbb{R}^d)$-negligible,$L^{\infty}(\mathbb{R}^d)$-negligible, and $L^{2}(\mathbb{R}^d)$-negligible, respectively, and the $L^\infty$-moderateness assumption on $(g_{\varepsilon})_{\varepsilon}$, and the $H^s$-moderateness of $(u_{\varepsilon})_{\varepsilon}$ and $(\tilde{u}_{\varepsilon})_{\varepsilon}$, being very weak solutions to \eqref{Equation in VW well-posedness}, we arrive at
    \begin{equation}\label{Estimate U uniqueness}
        \Vert U_{\varepsilon}(t,\cdot)\Vert_{L^2} \lesssim \varepsilon^{N} +  \int_{0}^{t} \omega(\varepsilon)^{-N_0}\big\Vert U_{\varepsilon}(\tau,\cdot) \big\Vert_{L^2} \d \tau,
    \end{equation}
    for every $t\in [0,T]$ and some $N_0 \in \mathbb{N}_0$, and any $N>0$. Gronwall's inequality (see Lemma \ref{Lemma: Gronwall}) applies. We get
    \begin{equation*}
        \Vert U_{\varepsilon}(t,\cdot)\Vert_{L^2} \lesssim \varepsilon^N \exp \big( \int_0^t \omega(\varepsilon)^{-N_0}\d \tau \big).
    \end{equation*}
    for all $t\in [0,T]$. If we choose $\omega(\varepsilon) = \big( \log(\varepsilon^{-1}) \big)^{\frac{-1}{N_0}}$, we obtain    
    \begin{equation*}
        \sup_{t\in [0,T]}\Vert U_{\varepsilon}(t,\cdot)\Vert_{L^2} \lesssim \varepsilon^{N-T},
    \end{equation*}
    for any $N>0$, proving the uniqueness of the very weak solution.
\end{proof}

\section{Compatibility with classical theory}
In this section, we prove that the notion of very weak solution is compatible with the classical theory, in the sense that, if a classical solution to 
\eqref{Equation in introduction} exists in the frame of Lemma \ref{Lemma1}, then the very weak solution recaptures it.

\begin{thm}\label{Compatibility theorem}
    Let $T>0$. Fix $d\geq 1$ and set $s>\frac{d}{2}$. Let $g,V\in L^{\infty}(\mathbb{R}^d)$ be non-negative and suppose that $u_0 \in H^{s}(\mathbb{R}^d)\cap L^4(\mathbb{R}^d)$. Then, for any regularising nets $(V_{\varepsilon})_{\varepsilon}=(V\ast\psi_{\varepsilon})_{\varepsilon}$ and $(g_{\varepsilon})_{\varepsilon}=(g\ast\psi_{\varepsilon})_{\varepsilon}$ for the equation coefficients, satisfying
    \begin{equation}\label{approx.condition1}
        \lVert V_{\varepsilon} - V\rVert_{L^{\infty}} \rightarrow 0,\quad\text{and}\quad \lVert g_{\varepsilon} - g\rVert_{L^{\infty}} \rightarrow 0,
    \end{equation} 
    as $\varepsilon \rightarrow 0$, and any regularising net $(u_{0,\varepsilon})_{\varepsilon}=(u_{0}\ast\psi_{\varepsilon})_{\varepsilon}$ for the initial function $u_0$, the net $(u_{\varepsilon})_{\varepsilon}$ converges to the classical solution (given by Lemma \ref{Lemma1}) of the Cauchy problem (\ref{Equation in introduction}) in $L^{2}$ as $\varepsilon \rightarrow 0$.
\end{thm}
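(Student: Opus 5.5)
The plan is to mimic the uniqueness argument of Theorem \ref{Uniqueness theorem}, replacing $\tilde u_\varepsilon$ by the classical solution $u$ of Lemma \ref{Lemma1}. Set $U_\varepsilon := u_\varepsilon - u$. It solves
\begin{equation*}
i\partial_t U_\varepsilon - (-\Delta)^s U_\varepsilon - V_\varepsilon U_\varepsilon = f_\varepsilon, \qquad U_\varepsilon(0,\cdot)=u_{0,\varepsilon}-u_0,
\end{equation*}
with
\begin{equation*}
f_\varepsilon := (V - V_\varepsilon)u + (g-g_\varepsilon)|u|^2u + g_\varepsilon\big(|u|^2u - |u_\varepsilon|^2u_\varepsilon\big)
\end{equation*}
(up to sign conventions). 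By Duhamel's principle (Proposition \ref{Prop. Duhamel 2}) and the $L^2$ conservation law \eqref{Estimate0_lemma1} for the linear homogeneous problem with potential $V_\varepsilon\ge 0$, we get
\begin{equation*}
\|U_\varepsilon(t)\|_{L^2}\le \|u_{0,\varepsilon}-u_0\|_{L^2} + \int_0^t \|f_\varepsilon(\tau)\|_{L^2}\,\d\tau .
\end{equation*}

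We estimate $f_\varepsilon$ term by term. The first term is bounded by $\|V_\varepsilon - V\|_{L^\infty}\|u_0\|_{L^2}$. For the second, we use $\||u|^2u\|_{L^2}=\|u\|_{L^6}^3$; since $d<2s$, $H^s\subset L^\infty$, so $\|u\|_{L^6}^3\le \|u\|_{L^\infty}^2\|u\|_{L^2}\lesssim \|u\|_{H^s}^2\|u_0\|_{L^2}$. The factor $\sup_t\|u(t)\|_{H^s}$ is a finite constant by \eqref{Estimate1_lemma1}, so this term is $\lesssim \|g_\varepsilon-g\|_{L^\infty}$. For the cubic difference we split as in terms A and B of the uniqueness proof and use the embedding $H^s\subset L^\infty$:
\begin{equation*}
\big\| |u|^2u-|u_\varepsilon|^2u_\varepsilon\big\|_{L^2}\lesssim \big(\|u\|_{H^s}+\|u_\varepsilon\|_{H^s}\big)^2\|U_\varepsilon\|_{L^2}.
\end{equation*}

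The key point, and the main obstacle, is that we now need the coefficient $\|g_\varepsilon\|_{L^\infty}(\|u\|_{H^s}+\|u_\varepsilon\|_{H^s})^2$ to be bounded uniformly in $\varepsilon$, not merely moderate, since there is no negligibility to absorb an $\omega(\varepsilon)^{-N}$ loss. This holds here because everything is regular. We have $\|g_\varepsilon\|_{L^\infty}\le\|g\|_{L^\infty}$ and $\|V_\varepsilon\|_{L^\infty}\le \|V\|_{L^\infty}$, since $\psi\ge0$ has unit integral. Also $\|u_{0,\varepsilon}\|_{H^s}\le\|u_0\|_{H^s}$ and $\|u_{0,\varepsilon}\|_{L^4}\le\|u_0\|_{L^4}$ by Young's inequality. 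Estimate \eqref{Estimate1_lemma1} applied to the regularised problem then gives $\sup_{t,\varepsilon}\|u_\varepsilon(t)\|_{H^s}\le C$, with $C$ independent of $\varepsilon$.

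Hence
\begin{equation*}
\|U_\varepsilon(t)\|_{L^2}\le C\Big(\|u_{0,\varepsilon}-u_0\|_{L^2}+\|V_\varepsilon-V\|_{L^\infty}+\|g_\varepsilon-g\|_{L^\infty}\Big)+C\int_0^t\|U_\varepsilon(\tau)\|_{L^2}\,\d\tau ,
\end{equation*}
and Gronwall's Lemma \ref{Lemma: Gronwall} yields
\begin{equation*}
\sup_{t\in[0,T]}\|U_\varepsilon(t)\|_{L^2}\le Ce^{CT}\Big(\|u_{0,\varepsilon}-u_0\|_{L^2}+\|V_\varepsilon-V\|_{L^\infty}+\|g_\varepsilon-g\|_{L^\infty}\Big).
\end{equation*}
The right-hand side tends to $0$ by \eqref{approx.condition1} and the standard convergence $u_0\ast\psi_\varepsilon\to u_0$ in $L^2$, which gives convergence of $u_\varepsilon$ to $u$ in $L^2$, uniformly on $[0,T]$.
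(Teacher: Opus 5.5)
Your proposal is correct and follows essentially the same route as the paper: Duhamel's principle with the $L^2$ conservation law for the linear flow, the three-term splitting of $f_\varepsilon$ with $H^s\subset L^\infty$ for the cubic difference, uniform-in-$\varepsilon$ bounds from \eqref{Estimate1_lemma1}, and then Gronwall. The differences are minor. You bound $\|u\|_{L^6}^3$ by $\|u\|_{L^\infty}^2\|u\|_{L^2}$ rather than using the GNS inequality, and you state explicitly the bounds $\|V_\varepsilon\|_{L^\infty}\le\|V\|_{L^\infty}$, $\|g_\varepsilon\|_{L^\infty}\le\|g\|_{L^\infty}$ and $\|u_{0,\varepsilon}\|_{H^s\cap L^4}\le\|u_0\|_{H^s\cap L^4}$, which the paper uses only implicitly to get $\sup_{t,\varepsilon}\|u_\varepsilon(t)\|_{H^s}<\infty$.
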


\begin{proof}
    Let $(u_{\varepsilon})_{\varepsilon}$ denote the very weak solution from Theorem \ref{Existence theorem} and $u$ the classical one from Lemma \ref{Lemma1}. The very weak solution satisfies
    \begin{equation*}
        \bigg\{
        \begin{array}{l}
        i\partial_{t}u_{\varepsilon}(t,x) = (-\Delta)^{s}u_{\varepsilon}(t,x) + V_{\varepsilon}(x)u_{\varepsilon}(t,x) + g_{\varepsilon}(x)\vert u_{\varepsilon}(t,x)\vert^2 u_{\varepsilon}(t,x),\\
        u_{\varepsilon}(0,x)=u_{0,\varepsilon}(x),
        \end{array}
    \end{equation*}
    with $(t,x)\in [0,T]\times\mathbb{R}^d$, and the classical one solves
    \begin{equation*}
        \left\lbrace
        \begin{array}{l}
        iu_{t}(t,x) = (-\Delta)^{s} u(t,x) + V(x)u(t,x) + g(x)\vert u(t,x)\vert^2 u(t,x) ,~~~(t,x)\in\left[0,T\right]\times \mathbb{R}^{d},\\
        u(0,x)=u_{0}(x).
        \end{array}
        \right.
    \end{equation*}
    Let $U_{\varepsilon}(t,x):= u_{\varepsilon}(t,x) - u(t,x)$. Then, $U_{\varepsilon}$ is a solution to
    \begin{equation}\label{Equation U in compatibility}
        \bigg\{
        \begin{array}{l}
        i\partial_{t}U_{\varepsilon}(t,x) - (-\Delta)^{s}U_{\varepsilon}(t,x) - V_{\varepsilon}(x)U_{\varepsilon}(t,x) = f_\varepsilon (t,x),~~~(t,x)\in\left[0,T\right]\times \mathbb{R}^{d},\\
        U_{\varepsilon}(0,x)=(u_{0,\varepsilon}-u_0)(x),~~~x\in \mathbb{R}^{d},
        \end{array}
    \end{equation}
    where
    \begin{align*}
        f_\varepsilon (t,x):= & \big( V_{\varepsilon}(x)-V(x)\big)u(t,x) + \big( g_{\varepsilon}(x)-g(x) \big)|u(t,x)|^2 u(t,x)\\
        & + g_{\varepsilon}(x)\Big( |u_{\varepsilon}(t,x)|^2 u_{\varepsilon}(t,x) - |u(t,x)|^2 u(t,x) \Big).
    \end{align*}
    Following step by step the arguments in the proof of Theorem \ref{Uniqueness theorem}, we arrive at
    \begin{align*}
        \Vert U_{\varepsilon}(t,&\cdot)\Vert_{L^2} \lesssim \Vert u_{0,\varepsilon} - u_0\Vert_{L^2} + \int_{0}^{t} \Bigg\{ \Vert V_{\varepsilon} - V \Vert_{L^{\infty}} \Vert u(\tau,\cdot) \Vert_{L^2} \\
        & + \Vert g_{\varepsilon} - g \Vert_{L^{\infty}}\big\Vert u(s,\cdot) \big\Vert_{H^s}^{\frac{d}{s}} \big\Vert u(\tau,\cdot) \big\Vert_{L^2}^{3-\frac{d}{s}} \\
        & + \Vert g_{\varepsilon} \Vert_{L^\infty}\bigg[\big\Vert u_{\varepsilon}(\tau,\cdot) \big\Vert_{H^s}  \big\Vert u_{\varepsilon}(\tau,\cdot) + u(\tau,\cdot) \big\Vert_{H^s} \big\Vert u_{\varepsilon}(\tau,\cdot) - u(\tau,\cdot) \big\Vert_{L^2} \\
        & + \big\Vert u(\tau,\cdot) \big\Vert_{H^s}^2 \big\Vert u_{\varepsilon}(\tau,\cdot) - u(\tau,\cdot) \big\Vert_{L^2}\bigg] \Bigg\}\d \tau,
    \end{align*}
    for any $t\in [0,T]$. Since $g\in L^\infty$, one can easily see that the term $\Vert g_\varepsilon \Vert_{L^\infty}$ is bounded uniformly in $\varepsilon$. The net $(u_{\varepsilon})_{\varepsilon}$ is a classical solution to the family of regularised problems \eqref{Regularised problems}, it satisfies the estimate \eqref{Estimate1_lemma1} from Lemma \ref{Lemma1}. Thus, the terms $\Vert u_\varepsilon(\tau,\cdot) \Vert_{H^s}$ and $\Vert u_\varepsilon(\tau,\cdot) \Vert_{L^2}$ in the above estimate are bounded as well, uniformly in $t\in [0,T]$. Also, $\Vert u(\tau,\cdot) \Vert_{H^s}$ and $\Vert u(\tau,\cdot) \Vert_{L^2}$ are bounded, since $u$ is the classical solution to \eqref{Equation in introduction}. Now, by using the triangle inequality, one obtains
    \begin{equation}\label{Estimate U compatibility}
        \Vert U_{\varepsilon}(t,\cdot)\Vert_{L^2} \lesssim \Vert u_{0,\varepsilon} - u_0\Vert_{L^2} + \Vert V_{\varepsilon} - V \Vert_{L^{\infty}} + \Vert g_{\varepsilon} - g \Vert_{L^{\infty}} + \int_{0}^{t} \big\Vert U_{\varepsilon}(\tau,\cdot) \big\Vert_{L^2},
    \end{equation}
    for any $t\in[0,T]$. Once again, Gronwall's inequality comes into play. We obtain
    \begin{equation*}
        \Vert U_{\varepsilon}(t,\cdot)\Vert_{L^2} \lesssim \Big[ \Vert u_{0,\varepsilon} - u_0\Vert_{L^2} + \Vert V_{\varepsilon} - V \Vert_{L^{\infty}} + \Vert g_{\varepsilon} - g \Vert_{L^{\infty}} \Big] \exp{t},
    \end{equation*}
    for all $t\in [0,T]$. Now, by using the assumption that
    \begin{equation*}
        \lVert V_{\varepsilon} - V\rVert_{L^{\infty}} \rightarrow 0,\quad\text{and}\quad \lVert g_{\varepsilon} - g\rVert_{L^{\infty}} \rightarrow 0, ~~~\text{as}\,\, \varepsilon\to 0,
    \end{equation*}
    and that $\Vert u_{0,\varepsilon} - u_0\Vert_{L^2} \rightarrow 0$, as $\varepsilon \rightarrow 0$, this latter being a consequence of the fact that $C_c^{\infty}(\mathbb{R}^d)$ is dense in $L^2(\mathbb{R}^d)$, we arrive at
    \begin{equation*}
        \sup_{t\in [0,T]}\Vert U_{\varepsilon}(t,\cdot)\Vert_{L^2} \rightarrow 0, 
    \end{equation*}
    as $\varepsilon$ goes to $0$, ending the proof.
\end{proof}

\section{Numerical simulations}
In this section, we carry out numerical experiments of the cubic nonlinear 
Schr\"odi-nger equation in the one-dimensional case. 
In this work, we are interested in the singular cases of the coefficients $V(x), g(x)$. Even, we can allow them to be distributional, in particular, to have $\delta$-like or $\delta^2$-like singularities. As it was theoretically outlined in \cite{RT17a} and \cite{RT17b}, we start to analyse our problem by regularising distributions $V, g$  by a parameter $\varepsilon$, that is, we set
\begin{equation}
V_{\varepsilon}(x):=(V*\varphi_{\varepsilon})(x), \quad g_{\varepsilon}(x):=(g*\varphi_{\varepsilon})(x), 
\end{equation}
as the convolution with the mollifier
\begin{equation}
\varphi_{\varepsilon}(x)=\frac{1}{\varepsilon}\varphi(x/\varepsilon),
\end{equation}
where
$\varphi(x)=
c \exp{\left(\frac{1}{x^{2}-1}\right)}$ for $|x| < 1,$ and $\varphi(x)=0$ otherwise. Here  $c \simeq 2.2523$ to get
$\int\limits_{-\infty}^{\infty}  \varphi(x)dx=1.$

Then, instead of \eqref{Equation in introduction}, we consider the regularised problem for $s=1$

\begin{align}
   i\partial_{t}u_{\varepsilon}(t,x) 
   = (-\Delta)^{s}u_{\varepsilon}(t,x) + V_{\varepsilon}(x)u_{\varepsilon}(t,x) + g_{\varepsilon}(x)\vert u_{\varepsilon}(t,x)\vert^2 u_{\varepsilon}(t,x)=0,
\label{Equation_num2}
\end{align}
for $(t,x)\in (0,T)\times \left(0,10\right)$, with the Cauchy data
$$
u_{\varepsilon}(0,x)=u_{0}(x),
$$
for all $x\in\left[0,10\right].$
Here, we put
\[
u_0(x)=
\left\lbrace
\begin{array}{l}
\exp(\frac{1}{(x-5)^2+0.25}), \,\quad\quad|x - 5| < 0.5 \\
0,  \,\,\,\,\quad \quad \quad \quad \quad\quad\quad\quad  | x - 5| \geq 0.5,
 \end{array}
\right.
\]
For the coefficient pair $(V,g)$ we consider the following representative configurations, where
$\delta$ denotes the Dirac delta distribution and the singularity is supported at $x_0=4.5$:
\begin{itemize}
\item \textbf{Case 1 (regular):} $V(x)=1$, \quad $g(x)=1$.
\item \textbf{Case 2 (singular potential):} $V(x)=1+\delta(x-x_0)$, \quad $g(x)=1$.
\item \textbf{Case 3 (singular nonlinear coefficient):} $V(x)=1$, \quad $g(x)=1+\delta(x-x_0)$.
\item \textbf{Case 4 (both singular):} $V(x)=1+\delta(x-x_0)$, \quad $g(x)=1+\delta(x-x_0)$.
\end{itemize}

\paragraph{Case 1 (regular reference).}
As a baseline for comparison, we first illustrate the regular case $V(x)\equiv 1$ and $g(x)\equiv 1$.
Figure~\ref{fig1} shows the solution of $u(t,x)$ at $t=10$.

\begin{figure}[h]
\centering
\includegraphics[width=0.55\textwidth]{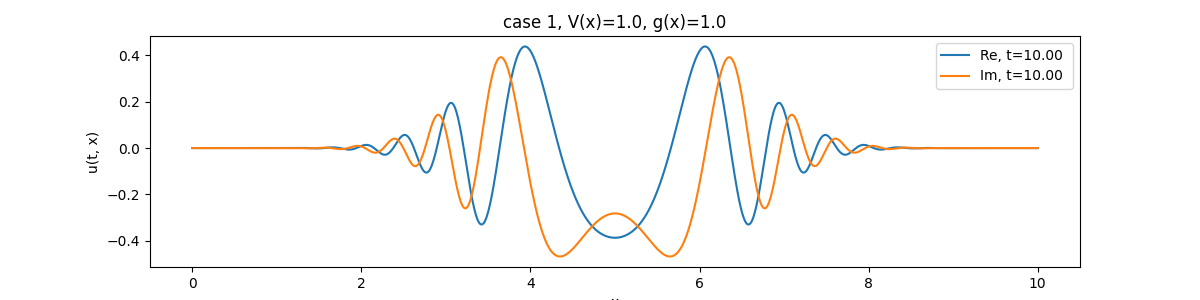}
\caption{Case 1: real and imaginary parts of $u(t,x)$ at $t=10$.}
\label{fig1}
\end{figure}

\paragraph{Case 2 (singular potential).}
Figure~\ref{fig2} depicts the evolution of the wave function $u(t,x)$ (solution of \eqref{Equation_num2})
for $\varepsilon\in\{1.0,0.7,0.3,0.01\}$ in Case~2, where the coefficient $V(x)$ contains a Dirac-type
singularity at $x=x_0$.
We plot both the real and imaginary parts. For these values of $\varepsilon$, the solution exhibits
a smooth propagation away from the singular point, with only a localised perturbation near $x=x_0$.

\begin{figure}[h]
\centering
\includegraphics[width=0.60\textwidth]{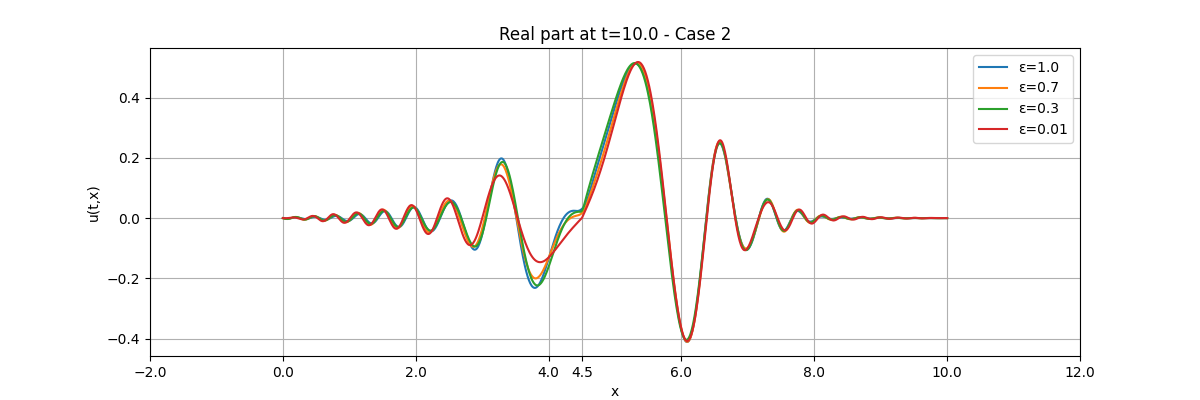}\\[2mm]
\includegraphics[width=0.60\textwidth]{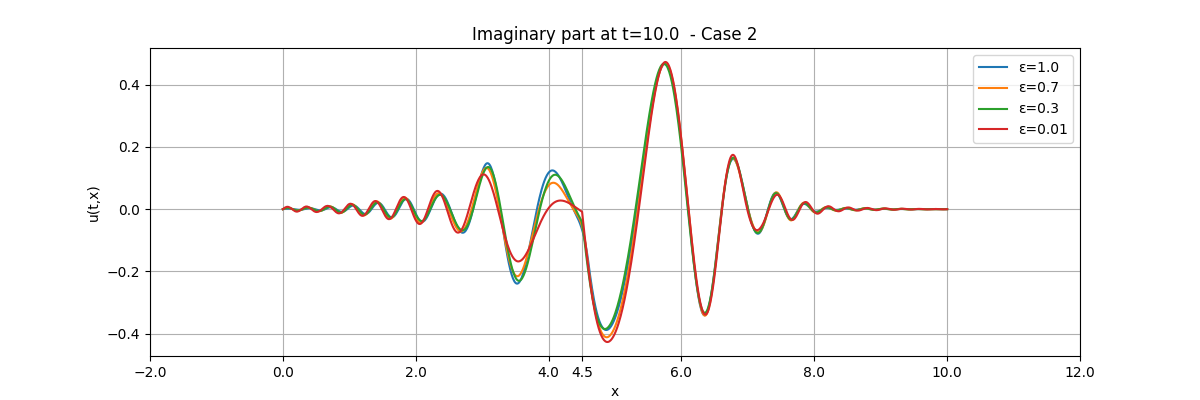}
\caption{Case~2: real and imaginary parts of $u(t,x)$ for $\varepsilon\in\{1.0,0.7,0.3,0.01\}$.}
\label{fig2}
\end{figure}

\paragraph{Case 3 (singular source).}
Figure~\ref{fig3} shows the corresponding results for Case~3, where the Dirac-type singularity is placed
in the coefficient $g(x)$ at $x=x_0$. Here we use $\varepsilon\in\{0.015,0.01,0.009,0.005\}$.
For larger $\varepsilon$, the regularisation smoothens the source contribution so strongly that its influence
on the solution is barely visible. Reducing $\varepsilon$ sharpens the localised forcing and makes the
impact of the singular source more pronounced.

\begin{figure}[h]
\centering
\includegraphics[width=0.60\textwidth]{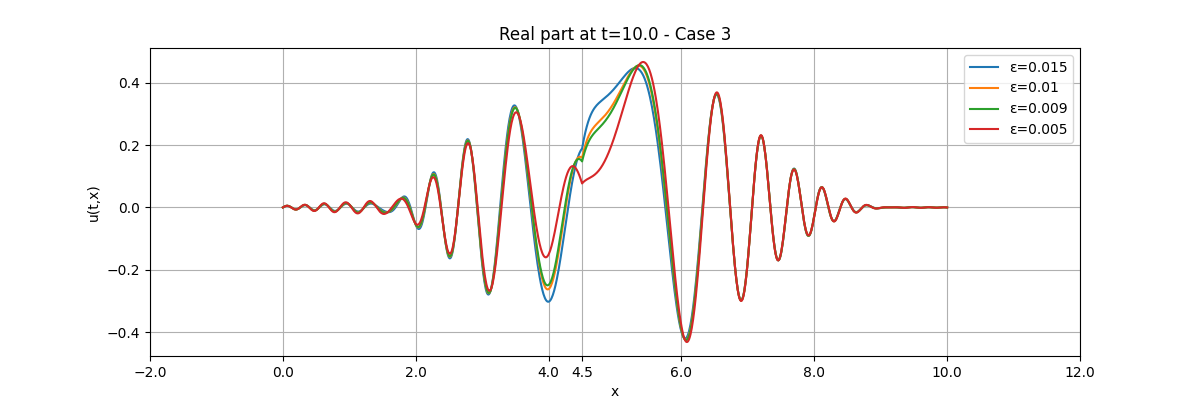}\\[2mm]
\includegraphics[width=0.60\textwidth]{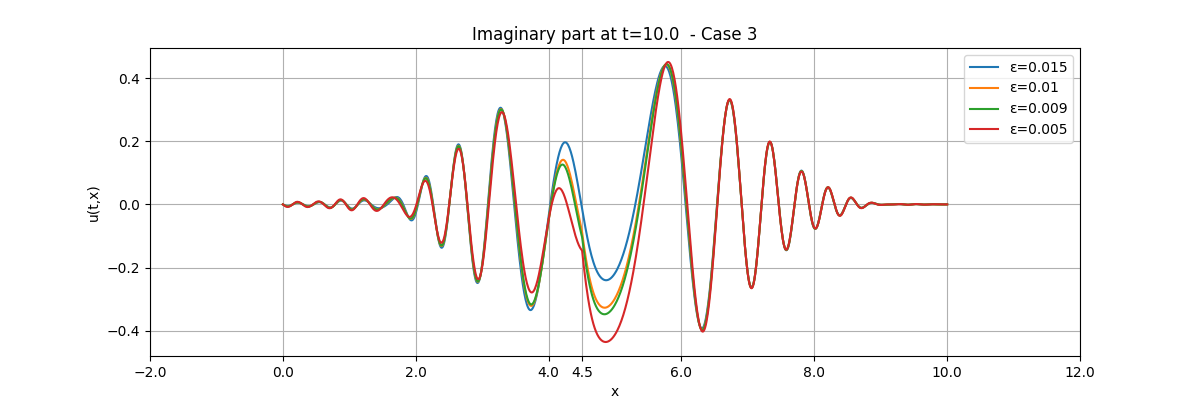}
\caption{Case~3: real and imaginary parts of $u(t,x)$ for $\varepsilon\in\{0.015,0.01,0.009,0.005\}$.}
\label{fig3}
\end{figure}

\paragraph{Case 4 (both coefficients singular).}
Finally, Figure~\ref{fig4} presents Case~4, in which both $V(x)$ and $g(x)$ contain Dirac-type
singularities supported at the same point $x=x_0$, for $\varepsilon\in\{1.0,0.7,0.3,0.01\}$.
As $\varepsilon$ decreases, the solution exhibits a stronger localised suppression near $x=x_0$; in particular,
the wave amplitude tends to vanish at the singular point, indicating a trapping (or blocking) effect.

\begin{figure}[h]
\centering
\includegraphics[width=0.60\textwidth]{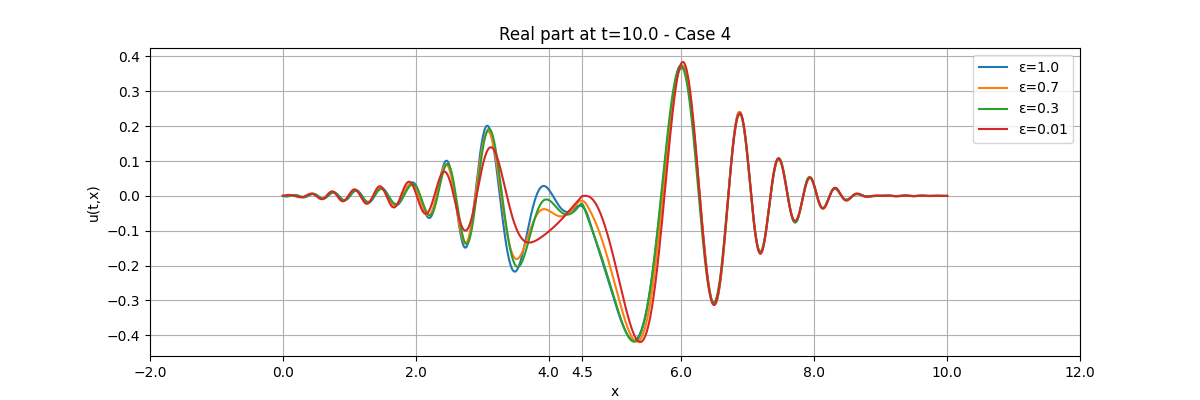}\\[2mm]
\includegraphics[width=0.60\textwidth]{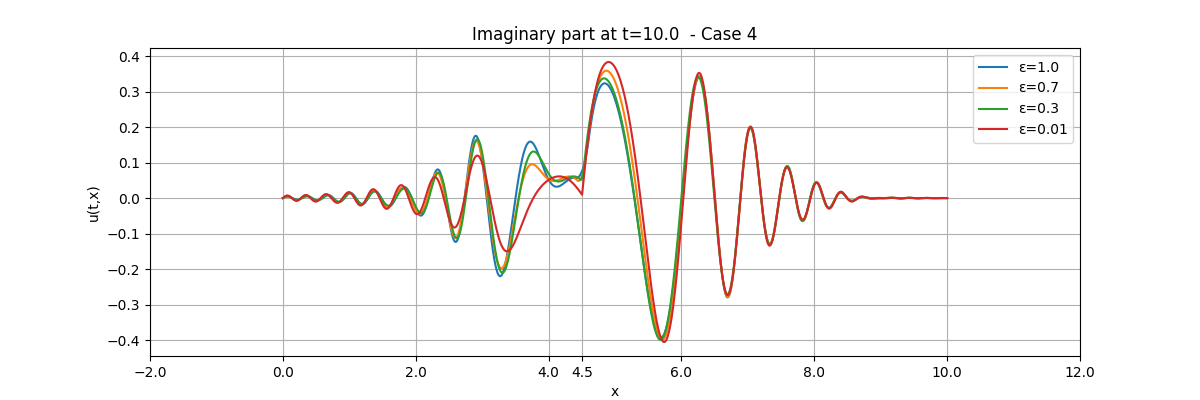}
\caption{Case~4: real and imaginary parts of $u(t,x)$ for $\varepsilon\in\{1.0,0.7,0.3,0.01\}$.}
\label{fig4}
\end{figure}

\paragraph{Discussion.}
These experiments demonstrate how Dirac-type terms in the coefficients can qualitatively alter wave propagation.
They also highlight the role of the regularisation parameter $\varepsilon$, which controls the sharpness of the
singular features and therefore the strength of the localised interaction. Overall, the numerical results
support the theoretical analysis and indicate that distributional coefficient models can be used to represent
highly localised events in wave transfer. This framework can be extended to more complex geometries, coupled
models, and related inverse problems.

\section{Concluding remarks}
We conclude this article with some remarks and open questions.
\begin{itemize}
    \item The novelty in this article lies in the fact that we are using the relatively recently introduced concept of very weak solutions, for nonlinear equations. Moreover, by incorporating singular objects, we are considering equations that cannot be posed in the framework of the classical notions of solutions (strong, weak or distributional). 
    \item Our uniqueness and compatibility results are proved in the case when $d<2s$, where the embedding $H^s(\mathbb{R}^d)\subset L^{\infty}(\mathbb{R}^d)$ holds, allowing to control $L^{\infty}$-norms by $H^s$-norms, nevertheless, many of our arguments remain valid in the general case. This gap will be addressed in a forthcoming paper.
    \item An easy extension of the results obtained in this paper is to consider the non-homogeneous case for the Cauchy problem \eqref{Equation in introduction}. The key ingredient is Duhamel's principle.
    \item For the sake of simplicity, this paper dealt with a Schr\"odinger-type equation with a cubic nonlinearity. The results obtained here can be easily extended for nonlinearities of the form $|u(t,x)|^{p-1}u(t,x)$ for an integer $p$, by reasoning exactly along the same lines. Furthermore, one can consider equations with combined power-type nonlinearities, that is, equations of the form
    \begin{equation*}
        iu_{t} = (-\Delta)^{s} u + Vu + g_1\vert u\vert^{p_1} u + g_2\vert u\vert^{p_2} u.
    \end{equation*}
    The question that naturally arises here, is to discuss the very weak well-posedness of such problems according to the values of parameters appearing in the equation. Moreover, general forms of nonlinearities $N(u)$ could also be considered. In order to prove energy estimates and uniqueness results, assumptions such as
    \begin{equation*}
        N(u) \leq C P(|u|),
    \end{equation*}
    and
    \begin{equation*}
        |N(u)-N(v)| \leq M(|u|,|v|) |u-v|,
    \end{equation*}
    where $P$ is a polynomial, and $M$ depends polynomially on $|u|$ and $|v|$, should be taken.
\end{itemize}







\end{document}